\documentclass{article}
\usepackage{authblk}
\usepackage{amsfonts,amsmath,amssymb,amsthm,hyperref}
\usepackage{xcolor, soul}
\sethlcolor{red}
\usepackage{graphicx} 
\usepackage{tikz}
\usepackage{comment}

\newtheorem{theorem}{Theorem}
\newtheorem{lemma}{Lemma}

\newtheorem{prop}{Proposition}
\newtheorem{coroll}{Corollary}
\theoremstyle{definition}

\newtheorem{example}{Example}[section]

\renewcommand\det{\operatorname{Fix}}
\newcommand\fix{\operatorname{Fix}}
\newcommand\dist{\operatorname{Dist}}

\newcommand\fdist{\operatorname{Fdist}}
\newcommand\fcost{\operatorname{Fcost}}
\newcommand\tnk{T^n_k}

\usepackage{todonotes}

\definecolor{korange}{RGB}{255,204,102}

\begin{document}

\title{Paint cost spectrum of perfect $k$-ary trees}

\author[1,2]{Sonwabile Mafunda\thanks{Financial support from Soka University of America (Varvak's RDA), USA and from the DSI-NRF Centre of Excellence in Mathematical and Statistical Sciences (CoE-MaSS), South Africa  is greatly acknowledged.}}
\author[1]{Jonathan L. Merzel}
\author[1]{K. E. Perry}
\author[1]{Anna Varvak}

\affil[1]{Soka University of America\\
USA}
\affil[2]{University of Johannesburg\\
South Africa}


\maketitle

\begin{abstract}
We determine the paint cost spectrum for perfect $k$-ary trees.

A coloring of the vertices of a graph $G$ with $d$ colors is said to be \emph{$d$-distinguishing} if only the trivial automorphism preserves the color classes. The smallest such $d$ is the distinguishing number of $G$ and is denoted $\dist(G).$ The \emph{paint cost of $d$-distinguishing $G$}, denoted $\rho^d(G)$, is the minimum size of the complement of a color class over all $d$-distinguishing colorings. A subset $S$ of the vertices of $G$ is said to be a \emph{fixing set} for $G$ if the only automorphsim that fixes the vertices in $S$ pointwise is the trivial automorphism. The cardinality of a smallest fixing set is denoted $\fix(G)$. In this paper, we explore the breaking of symmetry in perfect $k$-ary trees by investigating what we define as the \emph{paint cost spectrum} of a graph $G$: $(\dist(G); \rho^{\dist(G)}(G), \rho^{\dist(G)+1}(G), \dots, \rho^{\fix(G)+1}(G))$ and the \emph{paint cost ratio} of $G$, which is defined to be the fraction of paint costs in the paint cost spectrum equal to $\fix(G)$. We determine both the paint cost spectrum and the paint cost ratio completely for perfect $k$-ary trees.

We also prove a lemma that is of interest in its own right: given an $n$-tuple, $n \geq 2$ of distinct elements of an ordered abelian group and $1 \leq k \leq n! -1$, there exists a $k \times n$ row permuted matrix with distinct column sums.

\end{abstract}

\section{Introduction}\label{intro}

 Given a graph $G$, a natural question to ask is, \emph{What kind of symmetries exist in $G$?} One way to investigate this question is to consider how difficult it is to break all symmetries in the graph. In this paper, we unify two symmetry-breaking parameters -- the distinguishing number and the fixing number -- along with the paint cost of each, in what we define as the \textit{paint cost spectrum} of a graph.

All graphs in this paper are finite and simple. A coloring of the vertices of a graph $G$ with $d$ colors is said to be a \emph{$d$-distinguishing coloring} of $G$ if only the trivial automorphism preserves the color classes of $G$. A graph is \emph{$d$-distinguishable} if it has a $d$-distinguishing coloring and the \emph{distinguishing number} of $G$, denoted $\dist(G)$, is the smallest number of colors necessary for a distinguishing coloring of $G$. Graph distinguishing was introduced independently by Albertson and Collins in~\cite{AC1996} and by Babai in~\cite{Ba1977}. 

For a large number of graph families, all but a finite number of members are $2$-distinguishable, for example: hypercubes $Q_n$ with $n\geq 4$~\cite{BC2004}, Cartesian powers $G^n$ for a connected graph $G\ne K_2,K_3$ and $n\geq 2$~\cite{A2005, IK2006,KZ2007}, and Kneser graphs $K_{n:k}$ with $n\geq 6, k\geq 2$~\cite{AB2007}. 
The \emph{cost}, denoted $\rho(G)$, was defined in the context of a  2-distinguishable graph $G$ as the minimum size of a color class over all 2-distinguishing colorings of $G$~\cite{B2008}. 
Some graph families with known or bounded cost are hypercubes~\cite{B2008}, Kneser graphs~\cite{B2013b}, and the Cartesian product between $K_{2^m}$ and a graph without symmetries~\cite{BI2017}.

There are two perspectives from which one can generalize the cost to any $d$-distinguishable graph $G$: the minimum number of vertices in the graph needing to be recolored from a neutral color in order to have that $d$-distinguishing coloring of $G$, or, how far away is $G$ from being $(d-1)$-distinguishable.
Boutin~\cite{B2023} generalizes the cost from the first perspective, by defining the \emph{paint cost} of $d$-distinguishing $G$, denoted $\rho^d(G)$, (equivalently, the \emph{$d$-paint cost} of $G$) as the minimum size of the complement of a color class over all $d$-distinguishing colorings. 
Alikhani and Soltani~\cite{AK21} generalize the cost from the second perspective, by defining the \emph{cost number}, $\rho_d(G)$, as the size of the smallest color class over all $d$-distinguishing colorings of $G$. In this paper, we focus on Boutin's generalization. Note that for $2$-distinguishable graphs, we have that $\rho(G) = \rho^2(G) = \rho_2(G).$

 Boutin observed~\cite{B2023} that if $d \geq \dist(G)$, then $\rho^d(G) \geq \rho^{d+1}(G)$. Thus, the largest $d$-paint cost for a graph $G$ is $\rho^{\dist(G)}(G)$. It turns out that the smallest $d$-paint cost for a graph $G$ is equal to the \emph{fixing number} of $G$~\cite{B2023}. A subset $S \subseteq V(G)$, is said to be a \emph{fixing set} for $G$ if the only automorphism that fixes the elements of $S$
pointwise is the trivial automorphism. Intuitively, as the third author observed in~\cite{BCKLPR2020b}, when we consider automorphisms of the vertices, we can think of a fixing set as a set of vertices that when pinned in place, fix the entire graph. The size of a smallest fixing set of a graph $G$, denoted $\det(G)$, is called the \emph{fixing number} of $G$. The fixing numbers were introduced independently by Harary in~\cite{H2001} and Boutin in~\cite{B2006} and have also been studied in the literature under the names \textit{determining number} and \textit{rigidity index.}

Though distinguishing numbers and fixing numbers were introduced separately, there is a clear relationship between the two: Given a graph $G$, if one assigns to each vertex in a fixing set a unique color from all other vertices in the graph, this is a $(\det(G)+1)$-distinguishing coloring of $G$  with paint cost $\det(G)$. Thus, $\dist(G) \leq \det(G)+1$ and the smallest possible paint cost for $G$ is less than or equal to $\det(G)$.  On the other hand, since in any distinguishing coloring, the complement of a color class is always a fixing set, the paint cost of any coloring is greater than or equal to  $\det(G)$.  Thus, the smallest possible paint cost for a graph $G$ is  $\det(G)$, and this cost is achievable with  $\det(G)+1$ colors.

The introduction of $d$-paint cost makes way for many exciting and interesting questions with regard to distinguishing colorings of graphs. In particular, we are interested in investigating the $d$-paint cost of $d$-distinguishing colorings for all possible $d$. For a graph $G$, we define the \emph{paint cost spectrum} of a graph $G$ to be the list:
\begin{center}
$(\dist(G); \rho^{\dist(G)}(G), \rho^{\dist(G)+1}(G), \dots, \rho^{\det(G)+1}(G)).$
\end{center}

Perhaps contrary to one's intuition, the paint cost spectrum is not necessarily strictly decreasing. In particular, the \emph{frugal distinguishing number} of a graph $G$, denoted $\fdist(G)$, is the smallest $d$ for which $\rho^d(G) = \det(G)$~\cite{B2023}. Thus, if we let $D = \dist(G)$, $F = \fdist(G)$, then the paint cost spectrum of a graph $G$ is the list $(D; \rho^D(G), \rho^{D+1}(G), \dots, \rho^F(G) = \fix(G), \dots, \rho^{\det(G)+1}(G) = \fix(G))$.

Of interest also, of course, is how much of the spectrum consists of paint cost equal to $\fix(G)$, including the entirety of the spectrum itself. Motivated by this question, we introduce the \emph{paint cost ratio} of a graph $G$ and define it to be the ratio:
\begin{center}
$\displaystyle \frac{\fix(G) - \fdist(G) + 2}{\fix(G) - \dist(G) + 2}.$
\end{center}

We observe that this ratio is just the fraction of costs in the paint cost spectrum that are equal to $\fix(G)$. 

In this paper, we investigate the paint cost spectrum and paint cost ratio for perfect $k$-ary trees of depth $n$. For $n \geq 1$, $k \geq 2$, a \emph{perfect $k$-ary tree of depth $n$}, denoted $T_k^n$, is a connected acyclic graph with $k^n$ leaves, where there is one vertex, which we select as the root with degree $k$ whose distance to all the leaves is $n$ and all other internal vertices have degree $k+1$.  Note that, by degree, the root is uniquely determined, and so we can unambiguously speak of the depth of a vertex. We will show the following.

\newtheorem*{pc}{Theorem \ref{pc}}
\begin{pc}
Let $T^n_k$ be a perfect $k$-ary tree of depth $n$ for $n \geq 1$. Then, 
\begin{itemize}
    \item for $n=1,$ the paint cost spectrum is $(k;k-1)$ and the paint cost ratio is $1$,
    \item for $n=2,$ the paint cost spectrum is $(k; k^2-1, k(k-1), \dots, k(k-1))$ and the paint cost ratio is $\displaystyle \frac{k^2-2k+1}{k^2-2k+2}$,
    \item for $n \geq 3$, the paint cost spectrum is $$(k; k^n-1, (k-1)(k^2+1)k^{n-3}, k^{n-1}(k-1), \dots, k^{n-1}(k-1))$$ and the paint cost ratio is $ \displaystyle \frac{k^{n-1}(k-1)-k}{k^{n-1}(k-1) - k+2}$.
\end{itemize}
\end{pc}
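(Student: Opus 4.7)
The plan is to exploit the recursive structure of $T^n_k$ as a root attached to $k$ copies of $T^{n-1}_k$, under an automorphism group equal to the iterated wreath product of $S_k$ with itself $n$ times. I would first handle the extremes: $\dist(T^n_k)=k$ (lower bound by pigeonhole on the $k$ leaves sharing a common parent, upper bound by a recursive construction) and $\fix(T^n_k)=(k-1)k^{n-1}$ (lower bound from the free $S_k$ action on leaves under each depth-$(n-1)$ vertex, upper bound by choosing $k-1$ leaves from each such group and verifying that no nontrivial automorphism fixes this set pointwise).

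Next I would compute $\rho^k(T^n_k)$. For $n=1$ this is direct. For $n\ge 2$, the key observation is that a $k$-distinguishing coloring is essentially forced: at every depth $d\ge 1$, each sibling group of $k$ vertices must use all $k$ colors exactly once. I would prove this by induction on depth from the leaves upward: at the leaves it is immediate from the requirement that the $k$ leaves under a common parent receive distinct colors, and at higher levels the induction hypothesis makes all sibling subtrees structurally identical below their roots, so the root colors must all differ. This forces each color to appear $k^{d-1}$ times at depth $d$, giving a largest color class of $1+(k^n-1)/(k-1)$ vertices and hence $\rho^k(T^n_k)=k^n-1$. At the opposite extreme, for $d\ge \fix(T^n_k)+1$ the standard construction of coloring a minimum fixing set with unique colors realizes the universal lower bound $\rho^d=\fix(T^n_k)$.

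The heart of the proof is the middle regime $k<d\le \fix(T^n_k)$. The upper bounds are established by explicit construction, coloring most vertices with a ``background'' color and placing the remaining colors carefully. For $n=2$ and $d=k+1$, one can color all internal vertices background and use the $k$ distinct $(k-1)$-subsets of $\{1,\ldots,k\}$ across the $k$ depth-$1$ subtrees' leaves to distinguish them, achieving cost $k(k-1)=\fix(T^2_k)$. For $n\ge 3$ the construction is more delicate: symmetry must be broken at intermediate levels, and in particular the $d=k+1$ case requires coloring some internal vertices non-background and trading that off against leaf non-background counts in a way that lowers the total. The permuted-matrix lemma stated in the abstract is the natural tool: when $k$ sibling subtrees must be made pairwise distinguishable from a common base pattern, it provides $k$ row-permutations whose column sums are pairwise distinct, translating into a distinguishable assignment of colored substructures.

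The main obstacle will be the matching lower bound $\rho^{k+1}(T^n_k) \ge (k-1)(k^2+1)k^{n-3}$ for $n\ge 3$. This requires carefully tracking, for any candidate $(k+1)$-distinguishing coloring, the tradeoff between coloring an internal vertex non-background (which relaxes the distinguishing constraint on its parent's sibling group) and the extra leaves forced to be non-background to distinguish siblings at deeper levels. I would formalize this as a level-by-level optimization: at each depth, compute the minimum possible cost of a subtree subject to the distinguishing constraint on its root's children, considering both internal and leaf non-background placements, and show that the recursion propagates up the tree to yield the claimed bound. Once all paint cost values are pinned down, the paint cost ratio follows directly by counting the entries of the spectrum equal to $\fix(T^n_k)$.
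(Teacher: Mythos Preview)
Your plan is broadly aligned with the paper's approach---recursive structure, extremes first, then the middle---and the ingredients you name (pigeonhole at the leaves, the forced $k$-coloring, the permuted-matrix lemma) are all the right ones. But the organization of the middle regime is off in a way that matters.

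You treat $d=k+1$ as the central case and invoke the permuted-matrix lemma there; in the paper it is the other way around. The paper first proves that $\fdist(T^n_k)=k+2$ for $n\ge 3$ as a standalone result: with $k+2$ colors one can color only the fixing set (leaves only), and the permuted-matrix lemma is what drives the recursion---given a frugal coloring of $T^{n-1}_k$ whose special-color leaf counts $a_1,\ldots,a_k$ are distinct, the lemma produces $k$ permutations of those colors yielding $k$ pairwise non-isomorphic subtree colorings whose total leaf counts are again distinct. Your proposal never isolates this $d=k+2$ construction; without it the spectrum is incomplete, and more importantly the paper's $\rho^{k+1}$ upper bound for $n>3$ actually \emph{uses} it: one takes a frugal $(k+2)$-coloring of $T^{n-3}_k$ and replaces each leaf of color $c_i$ by a depth-$3$ leafy subtree carrying the $i$-th of $k+2$ pre-built efficient $(k+1)$-colorings of $T^3_k$. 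So the logical dependency runs $\fdist=k+2 \Rightarrow \rho^{k+1}$, not the reverse.

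For the $\rho^{k+1}$ lower bound, your level-by-level optimization would work but is heavier than needed. The paper instead proves a clean efficiency lemma: if there are at least $k$ strongly non-isomorphic efficient $d$-colorings of $T^{n-1}_k$, then every efficient $d$-coloring of $T^n_k$ restricts to an efficient one on each depth-$(n-1)$ leafy subtree. Combined with the uniqueness (up to strong isomorphism) of the efficient $(k+1)$-coloring of $T^2_k$, this pins down $\rho^{k+1}(T^3_k)=(k-1)(k^2+1)$ directly, and then propagates upward by induction once one exhibits at least $k$ distinct efficient colorings at each depth. This replaces your tradeoff bookkeeping with a single structural observation.
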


The paper is organized as follows. In Section~\ref{prelim}, we provide some necessary background and a discussion on the paint cost spectrum and paint cost ratio. In Sections~\ref{sec-dist},  \ref{sec-fix}, and \ref{sec-middle}, we determine the full paint cost spectrum of all perfect $k$-ary trees, including the distinguishing number, fixing number, and fugal number of all such trees. In Section~\ref{equality}, we include a discussion on graphs where all costs on the paint cost spectrum are equal, addressing a question of Boutin~\cite{B2023}. In particular, we show that for any paint cost greater than or equal to one, there exists an infinite family of trees with equality on the paint cost spectrum. Section~\ref{biglemma} consists of a generalization of a lemma used in an earlier proof which shows that given an $n$-tuple of distinct elements of an ordered abelian group, $n \geq 2$, and an integer $1 \leq k \leq n!-1$, there exists a $k \times n$ row permuted matrix with distinct column sums. Finally, in Section~\ref{future}, we conclude with some future directions and open problems.

\section{Preliminaries}\label{prelim}

In this section, we delve deeper into the paint cost spectrum and provide some tools and definitions we will use throughout the paper.

\subsection{The Paint Cost Spectrum and Ratio}\label{prelim1}

When investigating $d$-distinguishing colorings of a graph $G$ where $d$ can range from $\dist(G)$ to $\fix(G) +1$, it is of interest what paint cost values can be achieved and how these values relate to one another. Thus, we defined the \emph{paint cost spectrum} of a graph $G$ to be the list 
$$(\dist(G); \rho^{\dist(G)}(G), \rho^{\dist(G)+1}(G), \dots, \rho^{\det(G)+1}(G))$$

\noindent so that the values could be studied as a collection. When it causes no confusion to the reader, we sometimes refer to the paint cost spectrum as simply the spectrum.

We note here that while it may not be explicitly necessary to begin the paint cost spectrum by specifying $\dist(G)$, we do so for simplicity of understanding the number of colors to which the list of paint costs are associated. Of course since the final paint cost in the spectrum is $\det(G)$ using $\det(G)+1$ colors, one could count the total number of entries in the spectrum to determine a graph's distinguishing number. However, as in Theorem~\ref{pc}, it is often convenient to represent repeated paint costs in the paint cost spectrum with dots and so it becomes necessary to include a way to determine the number of values in the spectrum. Thus, we begin the spectrum by including the distinguishing number. We further note here that it was shown in~\cite{BI2017} that the cost of 2-distinguishing a graph can be an arbitrarily large multiple of the fixing number, so it follows that the length of the paint cost spectrum can also be arbitrarily large. 

Because the last entry in the paint cost spectrum corresponds to the paint cost of a $(\fix(G)+1)$-distinguishing coloring which uses $\fix(G)$ non-neutral colors and the first paint cost in the spectrum corresponds to the paint cost of a $\dist(G)$-distinguishing coloring, two graphs can only have the same paint cost spectrum if they have the same distinguishing number and the same fixing number. We further observe that it is certainly not always the case that graphs can be uniquely identifiable by their paint cost spectra. For example, any asymmetric graph will have paint cost spectrum $(1;0)$ and any graph with fixing number 1, such as path graphs, will have distinguishing number 2 and paint cost spectrum $(2;1)$. Thus, one can ask whether there exists a graph characterized by its paint cost spectrum.

It turns out, however, that this is not the case: Let $G$ be a connected graph with $V(G) \geq 2$ and let $G'$ be a copy of $G$ with a new vertex $v$ adjacent to every vertex. Then, attach to $v$ a single leaf, $\ell$. Since $\ell$ and $v$ are uniquely identifiable in $G'$ as the only leaf vertex and only vertex adjacent to a leaf, respectively, both can receive the neutral color in any distinguishing coloring of $G'$. Thus, any $d$-distinguishing coloring of $G$ extends to a $d$-distinguishing coloring of $G'$ in the natural way. Similarly, any $d$-distinguishing coloring of $G'$ can be mapped to vertices in $G$.

One big question of interest, however, concerning the paint cost spectrum is whether or not equal paint costs exist aside from $\rho^d(G)$ where $d \geq \fdist(G)$. In other words, can there be integers $i$ and $i+1$, $i+1 < \fdist(G)$, such that $\rho^i(G) = \rho^{i+1}(G)$? And, in particular, is it possible to have $\rho^{\dist(G)}(G) = \rho^{\dist(G)+1}(G)$ when $\rho^{\dist(G)}(G) \neq \fix(G)$?

\subsection{Tools}

 Throughout the paper, we will make use of the following ideas and definitions.

The \emph{paint cost} of $d$-distinguishing $G$ is defined to be the minimum size of the complement of a color class over all $d$-distinguishing colorings. Equivalently, we can determine the paint cost by identifying a largest color class over all $d$-distinguishing colorings and subtracting the size of that class from the total number of vertices in the graph. When considering the definition from this latter perspective, we will differentiate between vertices in the largest color class by assigning to them a \emph{neutral color}, denoted $c_0$, and to all other vertices a \emph{special color}, denoted by a color in the set $\{c_1, \dots, c_{d-1}\}$ in a $d$-distinguishing coloring.

Let $\Gamma(G)$ denote the automorphism group of a graph $G$ and for a vertex $v \in V(G)$, we say the \emph{orbit} of $v$ under $\Gamma(G)$ is the set $\{\phi(v) : \phi \in \Gamma(G)\}$. Since distance is preserved by automorphisms, any orbit in a perfect $k$-ary tree will consist of vertices at the same depth. 

For vertex-colored graphs, we single out two types of graph isomorphisms: A graph isomorphism between two vertex colored graphs $%
G_{1},G_{2}$ is called a \emph{weak isomorphism }if two vertices of $G_{1}$
are the same color if and only if their images in $G_{2}$ are the same
color, and is called a \emph{strong isomorphism} if the vertex color sets
of $G_{1}$ and $G_{2}$ are identical and the isomorphism is color-preserving.

Throughout the paper, we will make use of the following definition: For $0\le i \le n$, a \emph{leafy} subtree $T_k^i$ of $T_k^n$ is a copy of a perfect $k$-ary subtree of depth $n$ that forms a subgraph of $T_k^n$ such that all the leaves of $T_k^i$ are also leaves in $T_k^n$. This definition is useful as perfect $k$-ary trees have a natural recursive structure where a tree of depth $n$ is formed by connecting the root vertex to the roots of $k$ perfect $k$-ary trees of depth $n-1$. 

We also observe that since $\tnk$ is a uniquely rooted tree, any automorphism of $\tnk$ is determined by its action on leaves. It is easily seen that for any automorphism $\phi$ of $\tnk$, $\phi$ either permutes the leaves of a leafy subtree of depth 1 or maps those leaves bijectively to the leaves of another leafy subtree of depth 1.

Finally, we highlight the following results of Erwin and Harary~\cite{EH2006}, which we will use throughout the paper: (1) For every tree $T$, there is a fixing set of $T$ of size $\fix(T)$ consisting only of leaves of $T$; (2) Let $T$ be a tree and $S \subset V(T)$. Then $S$ fixes $T$ if and only if $S$ fixes the leaves of $T$.

\section{Distinguishing numbers of perfect $k$-ary trees}\label{sec-dist}

In this section, we investigate the distinguishing number for perfect $k$-ary trees and the paint cost when $d = \dist(T_k^n)$. In particular, we show that the distinguishing number of $\tnk$ is $k$ and that the $k$-paint cost is $k^n-1$.

\begin{theorem}\label{dist}
    The distinguishing number of a perfect $k$-ary tree $T_k^n$ is $k$ and the $k$-paint cost $\rho^k(T_k^n)$ is $k^n - 1$.

\end{theorem}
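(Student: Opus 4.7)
The plan is to prove $\dist(\tnk) = k$ by matching inequalities and to derive $\rho^k(\tnk) = k^n - 1$ by combining an explicit $k$-distinguishing coloring with a structural lemma constraining every such coloring. The lower bound $\dist(\tnk) \ge k$ is immediate: any depth-$(n-1)$ internal vertex has $k$ sibling leaves forming a single orbit, so any transposition of two same-colored leaves among them is a nontrivial color-preserving automorphism, forcing these $k$ leaves to receive $k$ distinct colors.

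For the matching upper bound on $\dist$ and the upper bound on paint cost, I construct the following coloring: for each non-root vertex fix an arbitrary bijection between its sibling group and the color set $\{c_0, c_1, \ldots, c_{k-1}\}$, and color the root $c_0$. Every sibling group then realizes all $k$ colors, so any color-preserving automorphism (which must fix the root and therefore preserve depth and each sibling group setwise) is forced to fix every vertex inductively on depth; hence the coloring is $k$-distinguishing. A straightforward count shows the $c_0$-class has size $1 + \frac{k^n - 1}{k - 1}$, and subtracting from the vertex total $\frac{k^{n+1} - 1}{k - 1}$ yields paint cost exactly $k^n - 1$.

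The harder direction, $\rho^k(\tnk) \ge k^n - 1$, reduces to the following \emph{structural claim}: in any $k$-distinguishing coloring of $\tnk$, every sibling group at every depth $d \in \{1, \ldots, n\}$ realizes each of the $k$ colors exactly once. Granted this, the count from the construction becomes an equality for every $k$-distinguishing coloring, bounding any color class by $1 + \frac{k^n-1}{k-1}$ and thus forcing $\rho^k(\tnk) \ge k^n - 1$. I would prove the claim by reverse induction on $d$, with base case $d = n$ being the leaf argument above. For the inductive step, assuming the claim at depths $d+1, \ldots, n$, I establish via a secondary induction (from the leaves upward) that any two vertices at a common depth $\ge d$ have rooted subtrees that are isomorphic as colored rooted trees, the isomorphism preserving colors everywhere except possibly at the root itself. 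The engine of this secondary induction is the primary hypothesis: it guarantees that the children of any such vertex realize all $k$ colors, so one can pair children of two distinct parents by their common color and recurse on each paired pair -- there the roots share a color, so the isomorphism extends to be fully color-preserving. Applied to any two siblings $v_i, v_j$ at depth $d$: if $v_i$ and $v_j$ shared a color, the resulting isomorphism would be fully color-preserving and lift to a nontrivial color-preserving automorphism of $\tnk$ swapping the subtrees at $v_i$ and $v_j$ while fixing everything else, contradicting the distinguishing hypothesis.

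The main obstacle is orchestrating this nested induction. Care is required to verify that the color-indexed matching between children of distinct parents extends consistently down to the leaves (yielding an isomorphism of full subtrees, not merely of the immediate children) and that the resulting local subtree swap genuinely lifts to a color-preserving automorphism of the full $\tnk$. The two Erwin--Harary observations in the Preliminaries -- that an automorphism of a tree is determined by its action on the leaves, and that in $\tnk$ any automorphism either stabilizes or bijectively swaps each depth-$1$ leafy subtree -- should make the lifting step routine.
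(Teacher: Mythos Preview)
Your proposal is correct and shares the paper's core insight: in any $k$-distinguishing coloring of $T_k^n$, every sibling group must realize all $k$ colors, which pins the paint cost at exactly $k^n-1$. The paper packages this as uniqueness of the $k$-distinguishing coloring up to strong isomorphism and proceeds by forward recursion on $n$, whereas you unpack it as an explicit structural claim proved by reverse induction on depth with a nested secondary induction; your version supplies the details where the paper's ``arguing recursively starting with the leaves'' is terse, but the content is the same.
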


\begin{proof}

Note that each level of depth $i$, $0 \leq i \leq n$, of $T_k^n$ has $k^i$ vertices and observe that $k^n-1 = \frac{k-1}{k} ( \frac{k^{n+1}-1}{k-1} - 1) = \frac{k-1}{k} \sum_{i=1}^n  k^i$, which is equivalent to taking $\frac{k-1}{k}$ of the vertices on each level, minus the root vertex at level 0. We let the neutral color be the largest color class and denote it by $c_0$.

We prove this claim by recursion.  For the base case of $n=1$, the $k$ leaves of the perfect $k$-ary tree $T_k^1$ must be colored with distinct colors $c_0, \ldots, c_{k-1}$, and the root can be colored with the neutral color $c_0$.
So the distinguishing number of $T_k^1$ is $k$ and the $k$-paint cost is $k-1$.

For the recursive step, suppose that the distinguishing number of $T_k^{n-1}$ is $k$, with the root colored the neutral color and the $k$-paint cost equal to $\frac{k-1}{k} \sum_{i=1}^{n-1}  k^i$. The perfect $k$-ary tree $T_k^n$ contains leafy subtrees of depth 1, so any coloring that removes symmetries must use at least $k$ colors.  $T_k^n$ is composed of a root connected to the roots of $k$ leafy subtrees $T_k^{n-1}$.  Use the $k$-distinguising coloring on each of those leafy subtrees, then distinguish the subtrees by coloring their roots with the $k$ colors $c_0, \ldots, c_{k-1}$.  (See Figure~1 for an example for $T_3^3$.)  Since $k$ colors suffice to remove all symmetries, the distinguishing number of $T_k^n$ is $k$.  Also, arguing recursively starting with the leaves, we've constructed the unique (up to strong isomorphism) $k$-distinguishing coloring of $T_k^n$ with neutral root, and it follows that the $k$-paint cost is:

	\begin{align*}
		(k-1) + k  \left(\frac{k-1}{k} \sum_{i=1}^{n-1} k^i\right) & = 
        (k-1) + \frac{k-1}{k} \sum_{i=2}^{n} k^i\\
        &= \frac{k-1}{k} k + \frac{k-1}{k} \sum_{i=2}^{n} k^i\\
        &= \frac{k-1}{k} \sum_{i=1}^{n} k^i.
	\end{align*}
    
\end{proof}

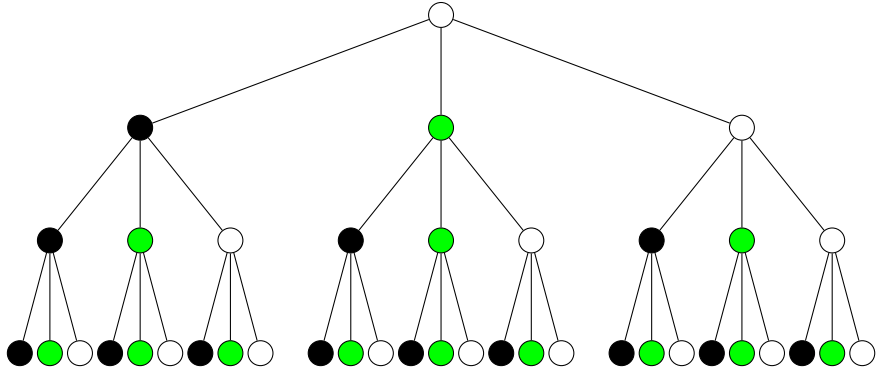
\begin{figure}
\centering

\begin{tikzpicture}[
  every node/.style={draw,circle,minimum size=0.3cm},
  level 1/.style={sibling distance=4cm},
  level 2/.style={sibling distance=1.2cm},
  level 3/.style={sibling distance=0.4cm},
]
\node {}
  child {node [fill=black] {}
    child {node  [fill=black]  {}
      child {node [fill=black] {}}
      child {node [fill=green] {}}
      child {node [fill=white] {}}
    }
    child {node [fill=green]  {}
      child {node [fill=black] {}}
      child {node [fill=green] {}}
      child {node [fill=white] {}}
    }
    child {node {}
      child {node [fill=black] {}}
      child {node [fill=green] {}}
      child {node [fill=white] {}}
    }
  }
  child {node [fill=green] {}
    child {node  [fill=black]  {}
      child {node [fill=black] {}}
      child {node [fill=green] {}}
      child {node [fill=white] {}}
    }
    child {node [fill=green]  {}
      child {node [fill=black] {}}
      child {node [fill=green] {}}
      child {node [fill=white] {}}
    }
    child {node {}
      child {node [fill=black] {}}
      child {node [fill=green] {}}
      child {node [fill=white] {}}
    }
  }
  child {node {}
    child {node [fill=black]  {}
      child {node [fill=black] {}}
      child {node [fill=green] {}}
      child {node [fill=white] {}}
    }
    child {node [fill=green]  {}
      child {node [fill=black] {}}
      child {node [fill=green] {}}
      child {node [fill=white] {}}
    }
    child {node {}
      child {node [fill=black] {}}
      child {node [fill=green] {}}
      child {node [fill=white] {}}
    }
  };
\end{tikzpicture}
\caption{3-distinguishing coloring of a perfect trinary tree.}
\end{figure}

\section{Fixing and frugal numbers of perfect $k$-ary trees}\label{sec-fix}

Recall that the frugal distinguishing number of a graph $G$, denoted $\fdist(G)$ is the smallest $d$ for which $\rho^d(G) = \det(G)$. In this section we determine the fixing number and frugal number for perfect $k$-ary trees.

We say a $d$-distinguishing coloring of $T_{k}^{n}$ is \emph{efficient} if among all $d$-distinguishing colorings it has a maximal number of neutrally colored vertices, \textit{i.e}. it realizes the $d$-paint cost. We note that any frugal coloring of $T_{k}^{n}$ is efficient as a coloring is \emph{frugal} if the complement of the set of neutrally colored
vertices has cardinality $\fix(T_{k}^{n})$ and there is no such $%
(d-1)$-distinguishing coloring.

\begin{prop}\label{fix}
    The fixing number of a perfect $k$-ary tree of depth $n$ is $(k-1)~\cdot~k^{n-1}$.
\end{prop}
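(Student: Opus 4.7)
The plan is to establish matching upper and lower bounds of $(k-1)k^{n-1}$ on the size of a fixing set. Throughout, I would lean on the Erwin--Harary results quoted at the end of Section~\ref{prelim}, which let us restrict attention to subsets consisting of leaves and to verify that a set fixes the tree by checking only that it fixes the leaves.

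For the \emph{upper bound}, I would exhibit an explicit fixing set. At each of the $k^{n-1}$ vertices $v$ of depth $n-1$, choose any $k-1$ of its $k$ leaf children and let $S$ be the union of these choices; then $|S|=(k-1)k^{n-1}$. To verify $S$ is a fixing set, let $\phi\in\aut(T^n_k)$ fix every vertex in $S$ pointwise. Since a leaf has its parent as its unique neighbor, $\phi$ must fix each depth-$(n-1)$ vertex that has at least one child in $S$, i.e.\ every such vertex. Then at each depth-$(n-1)$ vertex $v$, $\phi$ permutes $v$'s $k$ children while fixing $k-1$ of them, so it fixes all $k$. Hence $\phi$ fixes every leaf, and by Erwin--Harary, $\phi$ is trivial.

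For the \emph{lower bound}, let $S$ be a minimum fixing set; by Erwin--Harary we may take $S$ to consist entirely of leaves. For each depth-$(n-1)$ vertex $v$, set $S_v = S \cap \{\text{children of }v\}$. The permutations of the $k$ leaves below $v$ that fix every other vertex of $T^n_k$ are genuine graph automorphisms, forming a subgroup $H_v\le\aut(T^n_k)$ isomorphic to the symmetric group $S_k$. If $|S_v|\le k-2$, then $H_v$ contains the transposition of two children of $v$ not lying in $S_v$, which is a nontrivial automorphism fixing $S$ pointwise, contradicting that $S$ is a fixing set. So $|S_v|\ge k-1$ for every one of the $k^{n-1}$ depth-$(n-1)$ vertices, yielding $|S|\ge (k-1)k^{n-1}$.

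The main obstacle is being careful in the lower bound argument that the subgroup $H_v$ really does consist of automorphisms of the whole tree (not just of the leafy subtree rooted at $v$) and that any two such $H_v$, $H_{v'}$ act independently — both are immediate from the rigid level structure of $T^n_k$ combined with the observation recorded in Section~\ref{prelim} that any automorphism either permutes the leaves of a depth-$1$ leafy subtree or maps them bijectively to the leaves of another. The arithmetic then collapses to the single identity $|S|\ge (k-1)k^{n-1}$, matching the construction.
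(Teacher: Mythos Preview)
Your proof is correct and follows essentially the same approach as the paper: both the paper and you use transpositions of sibling leaves in depth-$1$ leafy subtrees to force at least $k-1$ leaves per subtree into any fixing set, and both take the resulting set of $(k-1)k^{n-1}$ leaves as the witnessing fixing set, invoking Erwin--Harary to conclude. Your write-up is somewhat more explicit about why the candidate set $S$ actually fixes every leaf and about the reduction to leaf-only fixing sets in the lower bound, but the underlying argument is the same.
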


\begin{proof}
The transposition of two leaves of a leafy subtree of depth 1 is an automorphism of $\tnk$. Therefore, for each leafy subtree of depth 1, all but one of the leaves must be in any minimal fixing set. In a perfect $k$-ary tree of depth $n$, there are $k^{n-1}$ leafy subtrees of depth 1. Therefore, $(k-1)\cdot k^{n-1}$ vertices must be fixed. Since this set of leaves fixes all of the leaves, it is a minimal fixing set of $\tnk$.
\end{proof}

We next present two lemmas which we will make use of in Theorem~\ref{FrugalN}, which proves the frugal number of $T_k^n$.

\begin{lemma}\label{PermLemma}
Let $a_{1},\cdots ,a_{k}$ be $k\geq 3$ distinct real numbers (or elements of
an ordered abelian group). Then there exist distinct permutations $\sigma
_{1},\cdots ,\sigma _{k}\in S_{k}$ such that the $k$ sums $s_{j}=\sum\limits_{i=1}^{k}a_{\sigma _{i}(j)},\ 1\leq j\leq k$ are distinct. In matrix terms, the matrix $\left( a_{\sigma _{i}(j)}\right) $ will have
distinct column sums.
\end{lemma}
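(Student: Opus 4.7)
The plan is an explicit construction: assume without loss of generality $a_1 < a_2 < \cdots < a_k$, and take $\sigma_1 = \mathrm{id}$ together with the $k-1$ adjacent transpositions $\sigma_i = (i-1,\, i)$ for $i = 2, \ldots, k$. The identity plus $k-1$ distinct adjacent transpositions are manifestly $k$ distinct elements of $S_k$, so the distinctness requirement on the permutations is immediate.

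Each $\sigma_i$ with $i \geq 2$ only disturbs positions $i-1$ and $i$ of the identity, so when I compute the column sums $s_j = \sum_i a_{\sigma_i(j)}$ I expect a tridiagonal-looking formula. For an interior column $(2 \leq j \leq k-1)$, row $j$ contributes $a_{j-1}$, row $j+1$ contributes $a_{j+1}$, and every other row contributes $a_j$, giving $s_j = a_{j-1} + (k-2)\, a_j + a_{j+1}$. The boundary columns specialize to $s_1 = (k-1)\, a_1 + a_2$ and $s_k = a_{k-1} + (k-1)\, a_k$.

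The only substantive step is then to verify that the column sums are strictly increasing. Computing consecutive differences, I expect each to factor as a nonnegative multiple of $(k-3)$ times a gap $a_{j+1} - a_j$, plus a second strictly positive gap of the form $a_p - a_q$ with $p > q$ --- for instance, $s_{j+1} - s_j = (k-3)(a_{j+1} - a_j) + (a_{j+2} - a_{j-1})$ for interior $j$, with analogous expressions at the two boundaries. Since $k \geq 3$ makes the first term nonnegative and the second term is always a strictly positive difference of sorted elements, every $s_{j+1} - s_j > 0$, so $s_1 < s_2 < \cdots < s_k$ and in particular the $s_j$ are distinct.

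The main difficulty I anticipate is finding a construction robust enough to work uniformly in the $a_i$'s. Many natural guesses --- the identity together with all transpositions $(1,\, i)$, a collection of cyclic shifts, or identity together with the reversal --- produce column sums that coincide whenever the $a_j$'s satisfy some arithmetic relation (for instance, when one $a_j$ equals the mean of the others). The adjacent-transposition construction sidesteps this because each consecutive difference decomposes into combinations of \emph{strict} gaps in the sorted sequence, so no arithmetic coincidence among the $a_i$ can force a collision.
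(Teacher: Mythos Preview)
Your proof is correct. Both your argument and the paper's follow the same overall strategy---sort the $a_i$, write down an explicit list of permutations, and verify that consecutive column-sum differences are strictly positive---but the two constructions differ. The paper takes $\sigma_i$ to be the $i$-cycle $(k,k-1,\ldots,k-i+1)$ (with the convention $a_1>\cdots>a_k$), obtaining $s_j-s_{j+1}=(k-j-1)(a_j-a_{j+1})+(j-1)(a_{j-1}-a_j)$. You instead use the identity together with the $k-1$ adjacent transpositions $(i-1,i)$, which yields the pleasantly symmetric interior formula $s_j=a_{j-1}+(k-2)a_j+a_{j+1}$ and the difference $s_{j+1}-s_j=(k-3)(a_{j+1}-a_j)+(a_{j+2}-a_{j-1})$. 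Your permutations are arguably simpler to describe, and your difference always contains a single strictly positive ``long gap'' term $a_p-a_q$ with $p>q$, whereas the paper's difference is a sum of two adjacent gaps with nonnegative coefficients that cannot both vanish. Either way the verification is a one-line inequality once $k\geq 3$, so the two approaches are of comparable length and difficulty.
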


\begin{proof}
Reorder  $a_{1},\cdots ,a_{k}$ so that $a_{1}>\cdots >a_{k}$. \ Then set $%
\sigma _{1}=id$ and for $2\leq i\leq k$ set $\sigma _{i}$ to be the $i$
cycle $(k,k-1,\cdots ,k-i+1)$. \ Then we have%
\[
a_{\sigma _{i}(j)}=\left\{ 
\begin{array}{cc}
a_{j}, & 1\leq j\leq k-i \\ 
a_{k}, & j=k-i+1 \\ 
a_{j-1} & k-i+1<j\leq k%
\end{array}%
\right. \text{ or equivalently }\left\{ 
\begin{array}{cc}
a_{j}, & 1\leq i\leq k-j \\ 
a_{k}, & i=k-j+1 \\ 
a_{j-1} & k-j+1<i\leq k%
\end{array}%
\right. 
\]%
It follows then (setting, say, $a_{0}=0$ when $j=1$) that  for $1\leq j\leq
k,\ \ s_{j}=(k-j)a_{j}+a_{k}+(j-1)a_{j-1}$, and for $1\leq j\leq k-1,\ \
s_{j+1}=(k-j-1)a_{j+1}+a_{k}+ja_{j}.$ \  Subtracting, we find that for $%
1\leq j\leq k-1:$ 
\[
s_{j}-s_{j+1}=(k-j-1)(a_{j}-a_{j+1})+(j-1)(a_{j-1}-a_{j})>0.
\]%
(The coefficients $k-j-1$ and $j-1$ cannot both be 0 as this would force $k=2
$.) \ So in fact, $s_{1}>s_{2}>\cdots >s_{k}$.
\end{proof}

\bigskip

\begin{example}
It may help to look at a special case, for example $k=5$. \ Then, with $%
a_{1}>a_{2}>a_{3}>a_{4}>a_{5}$ we have the matrix with permuted rows:%
\[
\left( 
\begin{array}{ccccc}
a_{1} & a_{2} & a_{3} & a_{4} & a_{5} \\ 
a_{1} & a_{2} & a_{3} & a_{5} & a_{4} \\ 
a_{1} & a_{2} & a_{5} & a_{3} & a_{4} \\ 
a_{1} & a_{5} & a_{2} & a_{3} & a_{4} \\ 
a_{5} & a_{1} & a_{2} & a_{3} & a_{4}%
\end{array}%
\right) 
\]

$s_{1}=4a_{1}+a_{5}$

$s_{2}=3a_{2}+a_{5}+1a_{1\ \ \ }\qquad \qquad s_{1}-s_{2}=3(a_{1}-a_{2})>0$

$s_{3}=2a_{3}+a_{5}+2a_{2}\qquad \qquad \ \ \
s_{2}-s_{3}=2(a_{2}-a_{3})+1(a_{1}-a_{2})>0$

$s_{4}=1a_{4}+a_{5}+3a_{3}\qquad \qquad \ \ \
s_{3}-s_{4}=1(a_{3}-a_{4})+2(a_{2}-a_{3})>0$

$s_{5}=0a_{5}+a_{5}+4a_{4}\qquad \qquad \ \ \
s_{4}-s_{5}=0(a_{4}-a_{5})+3(a_{3}-a_{4})>0$

\bigskip

\end{example}

 While the lemma above suffices for the purposes of Theorem~\ref{FrugalN}, it can be substantially generalized.  In fact, given $k$ distinct elements of an ordered abelian group, it is possible to produce any number up to $k!-1$ permutations with distinct column sums. We include the statement and proof of this fact in Section~\ref{biglemma}.

A further lemma is useful for establishing efficiency recursively in the proof of Theorem~\ref{FrugalN}:

\begin{lemma}\label{efficiency} \ If there are at least $k$ distinct (up to strong
isomorphism) efficient $d$-distinguishing colorings of $T_{k}^{n-1}$, then 

(1) any efficient $d$-distinguishing coloring of $T_{k}^{n}$ restricts to an
efficient $d$-distinguishing coloring on each leafy subtree of depth $n-1$,
and

(2) if the root of $T_{k}^{n}$ is given the neutral color and each leafy
subtree of depth $n-1$ is given a distinct (up to strong isomorphism)
efficient $d$-distinguishing coloring, the resulting coloring on $T_{k}^{n}$
is efficient.
\end{lemma}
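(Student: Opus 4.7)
The plan is to exploit the recursive structure of $T_k^n$, which consists of a root adjacent to the roots of $k$ leafy subtrees of depth $n-1$. Because $T_k^n$ is uniquely rooted, any color-preserving automorphism $\phi$ of $T_k^n$ fixes the root and induces a permutation of the $k$ leafy subtrees of depth $n-1$ realized by color-preserving isomorphisms between them. I would first establish two structural observations that hold for any $d$-distinguishing coloring of $T_k^n$: (a) its restriction to each leafy subtree of depth $n-1$ is itself $d$-distinguishing, since a non-trivial color-preserving automorphism of such a subtree extends by the identity on the rest of $T_k^n$ to a non-trivial color-preserving automorphism of $T_k^n$; and (b) no two of the $k$ restricted colorings are strongly isomorphic, since a strong isomorphism between two subtrees extends to a non-trivial swap automorphism of $T_k^n$.

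For part (1), I would argue by contradiction. Let $c$ be an efficient $d$-distinguishing coloring of $T_k^n$ and suppose its restriction to some leafy subtree $T$ of depth $n-1$ is not efficient, so $T$ carries strictly more than $\rho^d(T_k^{n-1})$ non-neutral vertices. The hypothesis supplies at least $k$ distinct (up to strong isomorphism) efficient $d$-distinguishing colorings of $T_k^{n-1}$, while the remaining $k-1$ leafy subtrees occupy at most $k-1$ strong-isomorphism classes, so pigeonhole produces an efficient coloring $c^{\ast}$ not strongly isomorphic to any of them. Overwriting $T$'s coloring with $c^{\ast}$ preserves both (a) and (b), so the global coloring remains $d$-distinguishing, but its non-neutral count strictly decreases, contradicting efficiency of $c$.

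For part (2), I would pair a lower bound with a matching construction. By (a), each of the $k$ leafy subtrees of depth $n-1$ in any $d$-distinguishing coloring of $T_k^n$ carries at least $\rho^d(T_k^{n-1})$ non-neutral vertices, so the global non-neutral count is at least $k\cdot \rho^d(T_k^{n-1})$. The coloring prescribed in (2) has a neutral root and assigns distinct efficient $d$-distinguishing colorings to the $k$ subtrees; by (a) and (b) it is itself $d$-distinguishing, and it realizes exactly $k\cdot \rho^d(T_k^{n-1})$ non-neutral vertices, matching the lower bound and hence being efficient.

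The step I expect to be the main obstacle, and the only place where the ``at least $k$'' hypothesis is genuinely invoked, is the replacement argument in part (1): one must simultaneously preserve global distinguishability and strictly reduce the non-neutral count, and having a full $k$ distinct efficient colorings available is precisely what lets the pigeonhole dodge the $k-1$ other subtrees while still yielding a strict decrease.
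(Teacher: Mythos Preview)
Your proposal is correct and follows essentially the same approach as the paper: for (1) you use the replacement/pigeonhole argument with the $k$ available efficient colorings, and for (2) you compare the construction against the optimum. Your write-up is in fact more complete than the paper's terse proof, since you make explicit the structural observations (a) and (b) and verify that they together imply global $d$-distinguishing; the only cosmetic difference is that for (2) the paper phrases the comparison as ``any efficient coloring has neutral root and, by (1), efficient subtrees, hence the same neutral count as the construction,'' whereas you phrase it as an explicit lower bound $k\cdot\rho^d(T_k^{n-1})$ that the construction meets.
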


\begin{proof} \ (1): \ If some leafy subtree of depth $n-1$ is not
efficiently colored, we can replace it with one of our $k$ distinct (up to
strong isomorphism) efficient $d$-distinguishing colorings of $T_{k}^{n-1}$,
chosen to be not strongly isomorphic with any of the other leafy subtrees at
that depth. \ This would generate a more efficient coloring.

(2): \ The root of any given efficient coloring on $T_{k}^{n}$ must be
neutral, and each leafy subtree of depth $n-1$ must be efficiently colored
by (1), so the constructed coloring must have the same number of neutrally
colored vertices as the given one.
\end{proof}

\begin{theorem}\label{FrugalN}
    The frugal number of a perfect $k$-ary tree of depth $n$ is $k+2$, for $n\ge 3$.  For $n=1$, the frugal number is $k$; for $n=2$, the frugal number is $k+1$.
\end{theorem}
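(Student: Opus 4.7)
The plan is to handle $n=1,2$ directly from Theorem~\ref{dist} and Proposition~\ref{fix}, then prove matching bounds for $n\ge 3$. For $n=1$, the $k$-distinguishing coloring of Theorem~\ref{dist} has paint cost $k-1 = \fix(T_k^1)$, giving $\fdist(T_k^1) = k$. For $n=2$, Theorem~\ref{dist} gives $\rho^k(T_k^2)=k^2-1 > (k-1)k = \fix(T_k^2)$, so $\fdist(T_k^2) > k$; neutrally coloring every internal vertex and, for each $j=1,\dots,k$, coloring the non-neutral leaves of the $j$-th depth-$1$ subtree with $\{c_1,\dots,c_k\}\setminus\{c_j\}$ yields a frugal $(k+1)$-distinguishing coloring (the $k$ level-$1$ subtrees realize the $k$ distinct $(k-1)$-subsets of the special colors), whence $\fdist(T_k^2)=k+1$.

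For the lower bound when $n\ge 3$, I assume toward contradiction that there is a $(k+1)$-distinguishing coloring of paint cost $(k-1)k^{n-1} = \fix(T_k^n)$. Each of the $k^{n-1}$ depth-$1$ subtrees needs $k$ distinct leaf colors (else sibling leaves could be swapped), hence at least $k-1$ non-neutral leaves; summing forces the budget to be tight, so every non-neutral vertex is a leaf and each depth-$1$ subtree has exactly one neutral leaf and $k-1$ distinctly colored non-neutral leaves. Each depth-$1$ subtree is therefore characterized by the single special color it omits, giving only $k$ possible ``omission patterns.'' For the $k$ depth-$1$ subtrees at any level-$(n-2)$ vertex to be pairwise non-isomorphic, all $k$ patterns must occur, so every level-$(n-2)$ vertex becomes the root of a weakly isomorphic depth-$2$ subtree. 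Since $n\ge 3$, some level-$(n-3)$ vertex exists, and its $k$ weakly isomorphic level-$(n-2)$ children admit a nontrivial color-preserving transposition, contradicting distinguishability. Hence $\fdist(T_k^n)\ge k+2$.

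For the upper bound when $n\ge 3$, I would construct a frugal $(k+2)$-distinguishing coloring recursively via Lemma~\ref{efficiency}. With $d = k+2$, each depth-$1$ subtree of a frugal coloring is characterized by its $2$-element subset of omitted colors in $\{c_1,\dots,c_{k+1}\}$, giving $\binom{k+1}{2}$ patterns. Let $m_n$ denote the number of strongly non-isomorphic efficient (paint cost $\fix(T_k^n)$) $(k+2)$-distinguishing colorings of $T_k^n$. The plan is to show $m_n\ge k+1$ for every $n\ge 2$ by induction: the base gives $m_2 = \binom{\binom{k+1}{2}}{k}\ge \binom{k+1}{k}=k+1$, using $\binom{k+1}{2}\ge k+1$ for $k\ge 2$; and Lemma~\ref{efficiency} then supplies the inductive step $m_n = \binom{m_{n-1}}{k}\ge \binom{k+1}{k}=k+1$. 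Placing any $k$ strongly non-isomorphic efficient colorings of $T_k^{n-1}$ on the level-$1$ subtrees of $T_k^n$, with the root colored neutrally, produces a frugal $(k+2)$-distinguishing coloring of $T_k^n$, so $\fdist(T_k^n)\le k+2$, matching the lower bound.

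The main obstacle is the lower bound: the paint-cost count must be saturated and parlayed into the structural conclusion that every non-neutral vertex is a leaf, after which the pigeonhole on the $k$ omission patterns forces weak isomorphism at level $n-2$ and supplies the contradicting automorphism. The permutation Lemma~\ref{PermLemma} is not required in this inductive approach, but could be used to render the base construction fully explicit.
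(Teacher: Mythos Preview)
Your proof is correct, and the upper-bound half takes a genuinely different route from the paper's.

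For $n=1,2$ and for the lower bound at $n\ge 3$, you and the paper run the same argument: the budget forces every non-neutral vertex to be a leaf, each depth-$1$ subtree omits exactly one special colour, and with only $k$ omission patterns every depth-$2$ subtree is forced to look identical, yielding a colour-preserving swap at level $n-3$. One terminological slip: you write that the depth-$2$ subtrees are \emph{weakly} isomorphic, but your argument in fact shows they are \emph{strongly} isomorphic (same multiset of omission patterns, all internal vertices neutral), and you need strong isomorphism for the transposition to be colour-preserving. The substance is fine.

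For the upper bound at $n\ge 3$, the paper builds an explicit frugal $(k+2)$-colouring of $T_k^3$ whose special colours have pairwise distinct leaf-counts, and then pushes this property up the recursion by permuting the special colours via Lemma~\ref{PermLemma}; the binary case $k=2$ is handled separately. Your approach instead counts the strongly non-isomorphic frugal $(k+2)$-colourings: the base $m_2=\binom{\binom{k+1}{2}}{k}\ge k+1$ supplies more than enough distinct colourings of $T_k^{n-1}$ to feed Lemma~\ref{efficiency}, and $m_n\ge\binom{m_{n-1}}{k}\ge k+1$ keeps the induction alive. This is cleaner: it avoids the permutation lemma entirely, handles $k=2$ and $k\ge 3$ uniformly, and needs no bookkeeping of colour multiplicities. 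What the paper's construction buys in exchange is explicitness---one can write down a concrete frugal colouring---and it is what motivates the independent interest of Lemma~\ref{PermLemma} and its generalisation in Section~\ref{biglemma}.
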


\begin{proof}
Denote the colors as $c_0,c_1,\ldots, c_k$, with $c_0$ indicating the neutral color, and the rest indicating the special colors. We prove the result for $(1)$ $ n=1$, $(2)~$$n=2$, $(3)$ $ n \geq 3$ and $k\geq 3$, and then finally $(4)$ $ n \geq 3$ and $k=2$, each in turn.

For depth $n=1$, we must color each of the leaves of $T_k^1$ with a unique color to remove all symmetries.  Frugal number includes the neutral color and is therefore $k$.

For depth $n=2$, each of the leaves of each leafy subtree $T_k^1$ of depth 1 must be differently colored, with one neutral color to minimize the use of the special colors (thus coloring with special colors only the nodes that must be fixed).  Each leafy subtree $T_k^1$ must therefore have one neutrally-colored leaf and the rest of its $k-1$ leaves must be colored with a distinct combination of the $k$ special colors.  Fortunately, there are exactly $\binom{k}{k-1} = k$ such combinations, exactly the number of distinct leafy subtrees of depth 1.

For $k \ge 3$, we will prove the claim for $n \ge 3$ by recursion, returning to the case of perfect binary trees later. The following method efficiently colors the leaves of a perfect $k$-ary tree at depth $n=3$ in such a way that the counts of the $k+2$ colors on the leaves are all distinct.

As shown above, there is a frugal coloring of the leaves of a depth-2 perfect $k$-ary tree using $k+1$ colors $c_0,c_1,\ldots, c_k$, using all possible combinations of the special colors $c_1,\ldots, c_k$ on its leafy subtrees of depth 1.  The neutral color is used on $k$ leaves and each of the special colors color $k-1$ leaves. As $T^3_k$ contains $k$ distinct copies of leafy subtree $T^2_k$, the frugal number for $T^3_k$ must be greater than $k+1$.

Number the $k$ distinct copies of leafy subtree $T^2_k$ in $T^3_k$.  Color the leaves of each copy with the colors $c_0,c_1,\ldots, c_k$ in the same way as the frugal coloring of a depth-2 perfect $k$-ary tree, then do the following color substitutions:

\begin{itemize}
    \item For the second copy, substitute one $c_1$ with $c_{k+1}$.
    \item For the third copy, substitute two $c_2$ with $c_{k+1}$.
    \item For the fourth copy, substitute three $c_3$ with $c_{k+1}$.

    $\cdots$

    \item For the $k$-th copy, substitute $k-1$ $c_{k-1}$ with $c_{k+1}$.
\end{itemize}

This coloring guarantees that no two copies of leafy subtree $T_k^2$ have the same coloring, so the only automorphism on $T_k^3$ is the trivial one.  Figure~2 shows an example of such a coloring for a depth-3 trinary perfect tree.
The color counts on the leaves of the tree are as follows:

\begin{itemize}
    \item the neutral color $c_0$ colors $k^2$ leaves
    \item $c_i$ colors $(k-1)^2 + (k-i-1)$ leaves, for $i=1, \cdots, k-1$
    \item $c_{k}$ colors $(k-1)k$ leaves
    \item $c_{k+1}$ colors $(k-1)k/2$ leaves
\end{itemize}

For $k \ge 3$, observe that all counts of different colors are distinct.

For $n > 3$ and $k \ge 3$,  the recursive step is as follows.  Suppose that for $T_k^{n-1}$, there is a frugal coloring using $k+2$ colors in such a way that internal nodes are colored by the neutral color $c_0$, and the leaves are colored by colors $c_0, \ldots, c_{k+1}$ in such a way that the number of leaves colored by each color $c_1, \ldots, c_k$ are all unique.  

The perfect $k$-ary tree $T_k^n$ has $k$ distinct leafy subtrees $T_k^{n-1}$.  We color the leaves of one of these subtrees with the above frugal coloring that we supposed to exist.  We color the rest of the leafy subtrees $T_k^{n-1}$ by using the same kind of coloring, but with permuted special colors $c_1, \ldots, c_k$, thus making each leafy subtree distinct from the other.  We need to find $k$ distinct permutations of $k$ colors such that the total number of leaves in the resulting $T_k^n$ colored by each color $c_1, \ldots, c_k$ is unique.  By Lemma~\ref{PermLemma}, there exist $k$ such permutations, completing the recursive step.  Lemma~\ref{efficiency} applies in the recursion to guarantee the coloring is efficient, and must in fact be frugal since the number of colors used has not changed in the recursive step.

For binary trees with $n \ge 3$, the recursive proof that the frugal number is 4 is slightly modified from the recursion above. Observe that each perfect binary tree $T_2^n$ has two leafy subtrees $T_2^{n-1}$, which we arbitrarily call `left' and `right' and observe further that this implies the frugal number of $T^n_2$, $n\geq 3$, is at least 4.

For $n = 3$, we will use the same coloring as above: color the left leafy subtree same as above, and for the right leafy subtree substitude color $c_3$ for $c_1$, again coloring the root with the neutral color. The number of leaves colored with $c_1$ and $c_3$ is 1, and the number of leaves colored with $c_2$ is 2.

For the recursive step, suppose that we have a frugal coloring of $T_2^{n-1}$. As the number of leaves in $T_2^{n-1}$ is $2^{n-1}$ with $2^{n-2}$ leaves receiving special colors, it follows that there exist two special colors for which the number of leaves painted with them is not equal. The following procedure then provides a frugal coloring for $T_2^n$: color the root with the neutral color $c_0$; color the left leafy subtree $T_2^{n-1}$ with the frugal coloring; color the right leafy subtree $T_2^{n-1}$ with a similar coloring, but switching two of the colors whose leaf counts are not equal. The resulting coloring is frugal and uses the same four colors.

Lemma~\ref{efficiency} again guarantees the efficiency and, in fact, frugality of the coloring.

\end{proof}

\begin{figure}\label{five-color}
\centering

\begin{tikzpicture}[
  every node/.style={draw,circle,minimum size=0.3cm},
  level 1/.style={sibling distance=4cm},
  level 2/.style={sibling distance=1.2cm},
  level 3/.style={sibling distance=0.4cm},
]
\node {}
  child {node {}
    child {node {}
      child {node [fill=black] {}}
      child {node [fill=green] {}}
      child {node [fill=white] {}}
    }
    child {node {}
      child {node [fill=black] {}}
      child {node [fill=cyan] {}}
      child {node [fill=white] {}}
    }
    child {node {}
      child {node [fill=green] {}}
      child {node [fill=cyan] {}}
      child {node [fill=white] {}}
    }
  }
  child {node {}
    child {node {}
      child {node [fill=red] {}}
      child {node [fill=green] {}}
      child {node [fill=white] {}}
    }
    child {node {}
      child {node [fill=black] {}}
      child {node [fill=cyan] {}}
      child {node [fill=white] {}}
    }
    child {node {}
      child {node [fill=green] {}}
      child {node [fill=cyan] {}}
      child {node [fill=white] {}}
    }
  }
  child {node {}
    child {node {}
      child {node [fill=black] {}}
      child {node [fill=red] {}}
      child {node [fill=white] {}}
    }
    child {node {}
      child {node [fill=black] {}}
      child {node [fill=cyan] {}}
      child {node [fill=white] {}}
    }
    child {node {}
      child {node [fill=red] {}}
      child {node [fill=cyan] {}}
      child {node [fill=white] {}}
    }
  };
\end{tikzpicture}
\caption{Frugal coloring of depth-3 trinary perfect tree, using five colors: white (neutral), black, green, cyan, and red.}
\end{figure}
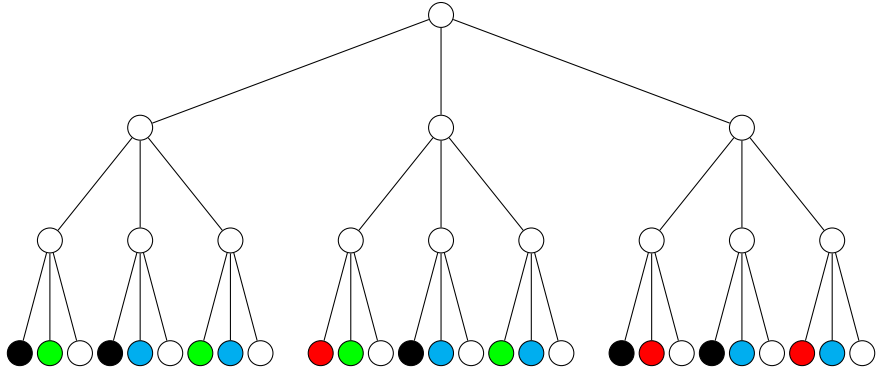

The following is a direct corollary of Theorem~\ref{FrugalN} and Proposition~\ref{fix}:

\begin{coroll}
    The paint cost of $(k+2)$-distinguishing a perfect $k$-ary tree of depth $n\ge 3$ is $\rho^{k+2}(T_k^n) = (k-1)k^{n-1}$.
\end{coroll}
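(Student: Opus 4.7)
The plan is essentially to unwind the definitions, since the statement is flagged as a direct corollary. Recall that the frugal distinguishing number $\fdist(G)$ is defined as the smallest $d$ for which $\rho^d(G) = \fix(G)$. Thus by definition, once we know $\fdist(T_k^n)$, we automatically know the value of $\rho^{\fdist(T_k^n)}(T_k^n)$, namely that it equals $\fix(T_k^n)$.

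The first step is to invoke Theorem \ref{FrugalN}, which establishes that $\fdist(T_k^n) = k+2$ for $n \geq 3$. Applying the definition of the frugal distinguishing number then gives $\rho^{k+2}(T_k^n) = \fix(T_k^n)$. The second step is to invoke Proposition \ref{fix}, which computes $\fix(T_k^n) = (k-1)k^{n-1}$. Chaining these two identities yields $\rho^{k+2}(T_k^n) = (k-1)k^{n-1}$, as required.

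There is no real obstacle here; all the substantive work has been done in the preceding theorem and proposition. The only thing worth flagging in the write-up is to make explicit that the frugal coloring constructed in the proof of Theorem \ref{FrugalN} uses exactly $(k-1)k^{n-1}$ non-neutral vertices, which is consistent with Proposition \ref{fix} and confirms that the construction realizes the claimed paint cost rather than merely bounding it from above.
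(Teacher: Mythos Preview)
Your proposal is correct and matches the paper's own treatment: the paper does not supply a separate proof but simply states that the corollary follows directly from Theorem~\ref{FrugalN} and Proposition~\ref{fix}, which is exactly the chain of implications you have written out.
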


\section{Paint cost for $k+1$ colors}\label{sec-middle}

In this section, we determine the $(k+1)$-paint cost of perfect $k$-ary trees. We then combine all results into Theorem~\ref{pc}, summarizing the paint cost spectrum and paint cost ratio for the full class of perfect $k$-ary trees.

\begin{theorem}\label{middle}
    The paint cost $\rho^{k+1}(T_k^n)$ of $(k+1)$-distinguishing a perfect $k$-ary tree of depth $n$ is $k-1$ for $n=1$, $k(k-1)$ for $n=2$, and $(k-1)(k^2+1)k^{n-3}$ for $n\ge 3$.  
\end{theorem}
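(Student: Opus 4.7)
The plan is to handle the three cases by induction on $n$. For $n=1$, any $(k+1)$-distinguishing coloring must give the $k$ leaves distinct colors; placing the neutral color on one leaf (and the root) yields paint cost $k-1$. For $n=2$, Theorem~\ref{FrugalN} gives that the frugal number equals $k+1$, so $\rho^{k+1}(T_k^2) = \fix(T_k^2) = k(k-1)$ by Proposition~\ref{fix}.

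The heart of the argument is $n=3$. For the upper bound, I would color all internal vertices of $T_k^3$ neutrally and assign the $k$ depth-$2$ subtrees as follows: one ``cheap'' ($\alpha$) type---whose $k$ depth-$1$ subtrees use the $k$ leaf-sets that contain $c_0$, giving cost $k(k-1)$---together with $k-1$ ``expensive'' types, each using the all-special leaf-set $\{c_1,\ldots,c_k\}$ plus $k-1$ of those $k$ cheap leaf-sets (cost $k^2-k+1$ each, pairwise strongly distinct because they differ in which cheap leaf-set they omit). The total is $k(k-1) + (k-1)(k^2-k+1) = (k-1)(k^2+1)$. For the lower bound, each depth-$2$ subtree must be a distinguishing $(k+1)$-coloring of $T_k^2$ (any automorphism of the subtree extends to $T_k^3$), so its cost is at least $\fix(T_k^2) = k(k-1)$. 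A case analysis shows this minimum is achieved by a unique (up to strong isomorphism) coloring, so at most one of the $k$ sibling depth-$2$ subtrees can achieve it; the rest have cost $\geq k^2-k+1$, forcing total cost $\geq (k-1)(k^2+1)$.

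For $n\geq 4$, the lower bound is immediate from the induction hypothesis: each depth-$(n-1)$ subtree is distinguishing and so has cost $\geq \rho^{k+1}(T_k^{n-1}) = (k-1)(k^2+1)k^{n-4}$, and summing over the $k$ subtrees yields the desired bound. For the matching upper bound I would construct a min-cost coloring of $T_k^n$ by combining $k$ strongly distinct min-cost colorings of $T_k^{n-1}$ under a neutral root. This requires a strengthened inductive hypothesis: the number $M_m$ of strongly distinct minimum-cost colorings of $T_k^m$ satisfies $M_m \geq k+1$ for all $3\leq m\leq n-1$. Since any min-cost $T_k^n$ must have neutral root and all $k$ depth-$(n-1)$ subtrees at minimum cost (otherwise the sum exceeds the bound), we have $M_n = \binom{M_{n-1}}{k} \geq \binom{k+1}{k} = k+1$, closing the induction.

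The main obstacle is the $n=3$ lower bound together with the base case $M_3\geq k+1$ of the strengthened hypothesis. The former requires showing the min-cost coloring of $T_k^2$ is unique up to strong isomorphism. The latter supplements the $k$ ``Type~I'' min-cost $T_k^3$ colorings (each obtained by omitting one of the $k$ expensive depth-$2$ types from $\{\alpha, E_1, \ldots, E_k\}$) with at least one additional coloring---for example, by replacing one expensive depth-$2$ subtree with a second $\alpha$-type depth-$2$ subtree whose root is colored non-neutrally; the added $+1$ for the non-neutral root exactly matches the difference $(k^2-k+1) - k(k-1) = 1$, so the total cost is unchanged.
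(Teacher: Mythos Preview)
Your argument is correct. For $n\le 3$ it tracks the paper closely: both rely on the uniqueness (up to strong isomorphism) of the minimum-cost $(k+1)$-coloring of $T_k^2$ to force at least $k-1$ of the depth-$2$ subtrees of $T_k^3$ to pay one extra unit, and both exhibit a matching construction. The genuine divergence is at $n\ge 4$. The paper does \emph{not} induct on $n$ directly; it takes a frugal $(k+2)$-coloring of $T_k^{\,n-3}$ (supplied by Theorem~\ref{FrugalN}, with all interior vertices neutral) and replaces each leaf colored $c_i$ by a leafy copy of $T_k^3$ carrying one of $k+2$ pre-built efficient $(k+1)$-colorings $a_0,\ldots,a_{k+1}$, then invokes Lemma~\ref{efficiency} level by level. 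Your route is a straight induction with the strengthened hypothesis $M_m\ge k+1$, established at $m=3$ by supplementing the $k$ Type~I colorings with a root-recolored $\alpha$-variant, and propagated via $M_m=\binom{M_{m-1}}{k}\ge\binom{k+1}{k}=k+1$. Your approach is more self-contained---it needs only the value $\fix(T_k^2)$, not the structural form of the frugal coloring from Theorem~\ref{FrugalN}---whereas the paper's leaf-substitution trick is slicker once that machinery is available and makes the link between the $(k+1)$- and $(k+2)$-problems explicit. One small point worth making precise: in your lower bound for $n\ge 4$, when you say each depth-$(n-1)$ subtree ``has cost $\ge\rho^{k+1}(T_k^{n-1})$'', you should note that the relevant quantity is the number of non-$c^*$ vertices in the subtree, where $c^*$ is the globally largest class; this is indeed $\ge\rho^{k+1}(T_k^{n-1})$ since the restriction is distinguishing and $c^*$ is \emph{some} color class there, but $c^*$ need not be the subtree's own largest class.
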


\begin{proof}
    Let $c_0$ indicate the neutral color and $c_1, \ldots, c_k$ indicate the special colors.

    For a perfect $k$-ary tree $T_k^1$ of depth 1, all the $k$ leaves must be differently colored to remove symmetries.  For the most frugal use of special colors, color the root and one of the leaves with the neutral color $c_0$, and color the rest of with a combination of special colors $c_1, \ldots, c_k$.  So $\rho^{k+1}(T_k^1) = k-1$.

    A perfect $k$-ary tree $T_k^2$ of depth 2 has $k$ copies of leafy subtrees $T_k^1$.  Within each leafy subtree $T_k^1$, all the $k$ leaves must be differently colored to remove symmetries. Thus, the minimum possible cost is $k(k-1)$. Color one leaf from each leafy subtree the neutral color $c_0$, along with its root and the root of $T_k^2$ itself. We wish to distinguish the leafy subtrees $T_k^1$ without coloring any more vertices; we do so by using different combinations of $k-1$ colors chosen out of a total of $k$ special colors.  Since there are exactly $k$ such combinations, there is exactly one way to achieve this coloring.  So $\rho^{k+1}(T_k^2) =k(k-1)$.

A perfect $k$-ary tree $T_{k}^{3}$ of depth 3 has $k$ copies of leafy
subtrees $T_{k}^{2}$. We develop a coloring of $T_{k}^{3}$ that uses special
colors efficiently by coloring all leafy subtrees $T_{k}^{2}$ with the
above-described coloring and then considering how to break symmetries
between them. Call a $k$-distinguishing coloring of a tree
``almost-efficient" if it has exactly one fewer neutrally colored vertex than
an efficient one.  Since the efficient coloring of $T_{k}^{2}$ is unique up
to strong isomorphism, one cannot produce a more efficient coloring of $%
T_{k}^{3}$ than one obtained by recoloring one neutrally-colored vertex in $%
k-1$ of the leafy subtrees $T_{k}^{2}$. \ We show below that this can be
done (multiple ways), making the paint cost $\rho
_{k+1}(T_{k}^{3})=(k-1)k^{2}+k-1=(k-1)(k^{2}+1).$

We observe  that there are $k^{2}+2k$ ways to create non-isomorphic
almost-efficient colorings of $T_{k}^{2}$ as above: $k$ choices for coloring
the root of $T_{k}^{2}$ with a special color; $k^{2}$ choices for coloring a
vertex at depth 1 with a special color; and $k$ ways of coloring a leaf with
a special color. (In case of coloring a leaf: once we pick a leafy subtree $%
T_{k}^{1}$, there is a single neutrally-colored leaf, and a single special
color that's different from the colors on other leaves.) Since $%
(k+2)(k-1)<k^{2}+2k$, we can easily create $k+2$ distinct (up to strong
isomorphism) efficient $(k+1)$-colorings of $T_{k}^{3}$.  Do so, and call
them $a_{0},a_{1},\ldots ,a_{k+1}.$

\bigskip 

For $n>3$, we find a $(k+1)$-distinguishing coloring of $T_{k}^{n}$ as
follows. First, we find a $(k+2)$-distinguishing coloring of $T_{k}^{n-3}$
which uses special colors only on the fixing set consisting of $(k-1)k^{n-1}$
leaves. By Theorem~\ref{FrugalN}, we know that such coloring exists.  We then substitute the leaves colored by $c_0, c_1, \ldots, c_{k+1}$ by leafy subtrees $T_k^3$ with colorings $a_0, a_1, \ldots, a_{k+1}$ that have corresponding indexes. Since all
leafy subtrees $T_{k}^{3}$ are efficiently colored with $k+1$ colors and no
other interior vertex is colored with special colors, we can, by induction
on depth, apply Lemma~\ref{efficiency} to conclude this coloring of $T_{k}^{n}$ is the most
efficient possible. Therefore, $\rho ^{k+1}(T_{k}^{n})=(k-1)(k^{2}+1)k^{n-3}$%
. 
    
\end{proof}

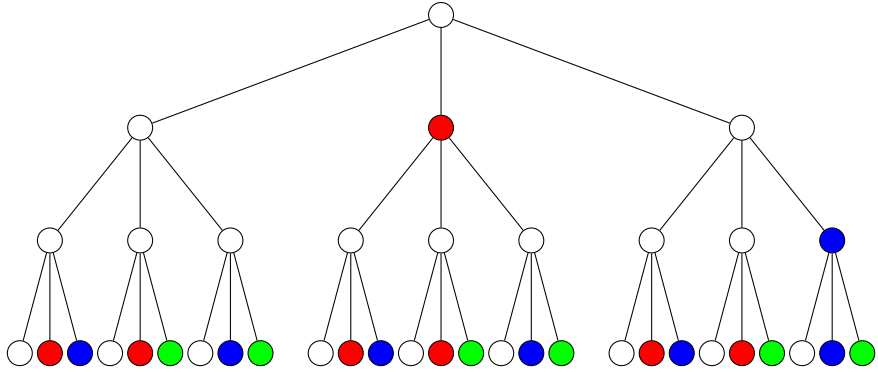
\begin{figure}
\centering

\begin{tikzpicture}[
  every node/.style={draw,circle,minimum size=0.3cm},
  level 1/.style={sibling distance=4cm},
  level 2/.style={sibling distance=1.2cm},
  level 3/.style={sibling distance=0.4cm},
]
\node {}
  child {node [fill=white] {}
    child {node  [fill=white]  {}
      child {node [fill=white] {}}
      child {node [fill=red] {}}
      child {node [fill=blue] {}}
    }
    child {node [fill=white]  {}
      child {node [fill=white] {}}
      child {node [fill=red] {}}
      child {node [fill=green] {}}
    }
    child {node {}
      child {node [fill=white] {}}
      child {node [fill=blue] {}}
      child {node [fill=green] {}}
    }
  }
  child {node [fill=red] {}
    child {node  [fill=white]  {}
      child {node [fill=white] {}}
      child {node [fill=red] {}}
      child {node [fill=blue] {}}
    }
    child {node [fill=white]  {}
      child {node [fill=white] {}}
      child {node [fill=red] {}}
      child {node [fill=green] {}}
          }
    child {node {}
     child {node [fill=white] {}}
      child {node [fill=blue] {}}
      child {node [fill=green] {}}
    }
  }
  child {node {}
     child {node  [fill=white]  {}
      child {node [fill=white] {}}
      child {node [fill=red] {}}
      child {node [fill=blue] {}}
    }
    child {node [fill=white]  {}
      child {node [fill=white] {}}
      child {node [fill=red] {}}
      child {node [fill=green] {}}
    }
    child {node [fill=blue] {}
      child {node [fill=white] {}}
      child {node [fill=blue] {}}
      child {node [fill=green] {}}
    }
  };
\end{tikzpicture}
\caption{One of many efficient 4-colorings of $T^3_3$.}
\end{figure}

Combining the results of Theorems~\ref{dist}, \ref{middle}, and~\ref{FrugalN}, as well as Proposition~\ref{fix}, we can completely determine the paint cost spectrum and paint cost ratio for perfect $k$-ary trees.

\bigskip

\begin{theorem}\label{pc}
Let $T^n_k$ be a perfect $k$-ary tree of depth $n$ for $n \geq 1$. Then, 
\begin{itemize}
    \item for $n=1,$ the paint cost spectrum is $(k;k-1)$ and the paint cost ratio is $1$,
    \item for $n=2,$ the paint cost spectrum is $(k; k^2-1, k(k-1), \dots, k(k-1))$ and the paint cost ratio is $\displaystyle \frac{k^2-2k+1}{k^2-2k+2}$,
    \item for $n \geq 3$, the paint cost spectrum is $$(k; k^n-1, (k-1)(k^2+1)k^{n-3}, k^{n-1}(k-1), \dots, k^{n-1}(k-1))$$ and the paint cost ratio is $ \displaystyle \frac{k^{n-1}(k-1)-k}{k^{n-1}(k-1) - k+2}$.
\end{itemize}
    
\end{theorem}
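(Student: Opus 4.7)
The plan is to assemble Theorem~\ref{pc} as a direct corollary of the results already proved: Theorem~\ref{dist} gives $\dist(T_k^n) = k$ and $\rho^k(T_k^n) = k^n - 1$; Proposition~\ref{fix} gives $\fix(T_k^n) = (k-1)k^{n-1}$; Theorem~\ref{middle} gives $\rho^{k+1}(T_k^n)$ in each of the three cases $n=1$, $n=2$, $n \geq 3$; and Theorem~\ref{FrugalN} gives the frugal number $\fdist(T_k^n)$, which by definition is the first index at which the paint cost reaches $\fix(T_k^n)$. The additional fact I would invoke (from the introduction) is that $\rho^d(G)$ is nonincreasing in $d$ and bounded below by $\fix(G)$, so once $\rho^d = \fix$ all subsequent entries also equal $\fix$.

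First I would pin down the length of the spectrum. Since the entries are indexed by $d = \dist(T_k^n), \ldots, \fix(T_k^n) + 1$, the spectrum has $\fix(T_k^n) - \dist(T_k^n) + 2 = (k-1)k^{n-1} - k + 2$ entries; in particular, for $n=1$ this is $1$ (a single entry), and for $n=2$ it is $k^2 - 2k + 2$. Then I would handle the three cases in turn. For $n=1$, $\fix + 1 = k = \dist$, so the spectrum has the single entry $\rho^k = k-1$. For $n=2$, Theorem~\ref{FrugalN} gives $\fdist = k+1$, so $\rho^d = \fix = k(k-1)$ for all $d \geq k+1$, and the only nontrivial entry is $\rho^k = k^2 - 1$ from Theorem~\ref{dist}. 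For $n \geq 3$, Theorem~\ref{FrugalN} gives $\fdist = k+2$, so $\rho^d = \fix = (k-1)k^{n-1}$ for $d \geq k+2$, while $\rho^k = k^n - 1$ and $\rho^{k+1} = (k-1)(k^2+1)k^{n-3}$ by Theorems~\ref{dist} and~\ref{middle}.

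Finally I would compute the paint cost ratio in each case from the defining formula $(\fix - \fdist + 2)/(\fix - \dist + 2)$. For $n=1$ both numerator and denominator equal $1$, giving ratio $1$. For $n=2$ I would substitute $\fix = k(k-1)$, $\fdist = k+1$, $\dist = k$ to obtain $(k^2 - 2k + 1)/(k^2 - 2k + 2)$. For $n \geq 3$ I would substitute $\fix = (k-1)k^{n-1}$, $\fdist = k+2$, $\dist = k$ to obtain $(k^{n-1}(k-1) - k)/(k^{n-1}(k-1) - k + 2)$.

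There is no real obstacle: the theorem is essentially a tabulation of prior results. The only points requiring care are (i) checking that the degenerate case $n=1$, where $\fdist = \dist$, produces the correct single-entry spectrum and ratio $1$, and (ii) confirming that in the $n=2$ case the drop from $\rho^k = k^2 - 1$ to $\rho^{k+1} = k(k-1) = \fix$ happens in a single step, so that no additional intermediate entries need to be listed between the two displayed values.
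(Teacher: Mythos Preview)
Your proposal is correct and follows essentially the same approach as the paper, which simply states that the theorem is obtained by combining Theorems~\ref{dist}, \ref{middle}, \ref{FrugalN}, and Proposition~\ref{fix}. In fact, you supply more detail than the paper does, explicitly verifying the spectrum lengths, the degenerate $n=1$ case, and the ratio computations.
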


Lastly, we observe here that $T^n_k$ is defined only for $k\geq 2$ and $n\geq 1$, but that if we allowed $k=1$, $T^n_1$ would result in a path graph with $n$ edges. For completeness, we note that such path graphs are 2-distinguishable with paint cost 1 by coloring one vertex at the end of the path a special color and coloring all other vertices a neutral color. Similarly, path graphs have fixing number 1. Thus, we can say that the paint cost spectrum for a path graph is $(2; 1)$ and has paint cost ratio of 1.

\bigskip

\section{Equality on the Paint Cost Spectrum}\label{equality}

The paint cost spectrum of a graph $G$ has equality for all paint costs when the paint cost of distinguishing $G$ with $\dist(G)$ colors is equal to the $\fdist(G)$-paint cost; this occurs when the paint cost ratio is equal to 1. In~\cite{B2023}, Boutin asked which graphs have this property and in this section, we provide several examples. We also show that for any paint cost greater than or equal to 1, there exists an infinite class of graphs with equality on the paint cost spectrum.

\begin{example}
    Graphs that are asymmetric (their only automorphism is the trivial automorphism) have equality on the paint cost spectrum. Let $G$ be an asymmetric graph. Then $\dist(G) = 1$ and $\det(G) = 0$ and it follows that $\rho^1(G) = \rho^{\det(G)} = 0$ so the paint cost spectrum of any asymmetric graph is $(1; 0)$.
\end{example}

\begin{example}
    Complete graphs, $K_n$, have equality on the paint cost spectrum: $\dist(K_n) = n$ and $\rho^{n} = n-1$. Further, $\det(K_n) = n-1$. Thus, the paint cost spectrum of $K_n$ is $(n; n-1)$.
\end{example}

\begin{example}
    The class of complete $r$-partite graphs have equality on the paint cost spectrum. Let $G = K_{k_1, k_2, \dots, k_r}$ be a complete $r$-partite graph with partitions of size $k_1, k_2, \dots, k_r$. In any distinguishing coloring of $G$, each vertex in a partition will need to receive a different color than the other vertices in that same partition. Though colors may be repeated in different partitions if some partitions have unique sizes, the $\dist(G)$-paint cost will still be the sum $(k_1-1) + (k_2-1) + \cdots + (k_r-1)$. Similarly, a fixing set must also fix all but one vertex from each partition and so $\det(G) = (k_1-1) + (k_2-1) + \cdots + (k_r-1) = \rho^{\det(G)+1}(G)$.
\end{example}

\begin{example}
    Let $t_1, t_2, \dots t_k$ be positive integers. A \textbf{$k$-pode} $T_k(t_1, t_2, \dots, t_k)$ is the tree that has one degree $k$ vertex, $v$, the removal of which leaves $k$ disjoint paths having $t_1, t_2, \dots, t_k$ vertices. Let $T = T_k(t_1, t_2, \dots, t_k)$ be a $k$-pode graph and let $\mathcal{A}_1, \mathcal{A}_2, \dots \mathcal{A}_j$, $1 \leq j \leq k$ be the collections of isomorphic paths in $T - v$. For each $\mathcal{A}_i$, let $a_i$ denote the number of vertices on each path in $\mathcal{A}_i$. Since paths of unique length in $T$ are fixed in any automorphism, $\det(T) = \sum_{i=1}^j |\mathcal{A}_i| - 1$. If it is also the case that $|\mathcal{A}_i| \leq a_i$ for all $1 \leq i \leq j$, then $T$ is $2$-distinguishable and the 2-paint cost is precisely $\det(T)$, giving equality throughout the paint cost spectrum. 
\end{example}

The $k$-pode graphs can be used to demonstrate the following theorem.

\begin{theorem}\label{equal}
    Let $d \geq 1$ be a positive integer. Then there exists an infinite class of graphs such that $d$ is equal to the $\dist(G)$-paint cost, which in turn is equal to the $\fix(G)$-paint cost.
\end{theorem}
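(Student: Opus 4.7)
The plan is to realize the required family as $(d+1)$-podes with all legs of equal length, directly exploiting the preceding example. For each positive integer $d$, let
\[
\mathcal{F}_d = \{T_{d+1}(t, t, \dots, t) : t \geq d+1\}.
\]
Each such tree has a central vertex $v$ of degree $d+1$ whose removal leaves $d+1$ pairwise isomorphic paths on $t$ vertices. Since $t$ ranges over an infinite set, $\mathcal{F}_d$ is infinite.

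I would then verify three properties for an arbitrary $T \in \mathcal{F}_d$. First, using the fixing-number computation from the preceding example, $T$ has a single equivalence class $\mathcal{A}_1$ of $d+1$ mutually isomorphic legs, and so $\fix(T) = |\mathcal{A}_1| - 1 = d$. Second, the hypothesis $|\mathcal{A}_1| \leq a_1$ of the example becomes $d+1 \leq t$, which holds by the choice of $t$; hence the example applies and yields both $\dist(T) = 2$ and $\rho^{2}(T) = \fix(T) = d$. Third, because the paint costs are non-increasing in the number of colors, the spectrum
\[
(2;\ \rho^{2}(T),\ \rho^{3}(T),\ \dots,\ \rho^{d+1}(T))
\]
is squeezed between its first entry $\rho^{2}(T) = d$ and its last entry $\rho^{d+1}(T) = \fix(T) = d$, so every entry must equal $d$. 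In particular, the $\dist(T)$-paint cost and the final ($\fix(T)$-related) entry of the spectrum both equal $d$, which is exactly the conclusion of the theorem.

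The only step that requires real care is the realization of the fixing lower bound by a $2$-coloring, but this is precisely the content of the preceding example: one assigns the special color to all but one leg at mutually distinct depths, which is possible exactly because each leg provides at least $d+1$ available depths. With that observation in hand, the proof reduces to invoking the example with the symmetric parameter choice $t_1 = \cdots = t_{d+1} = t$ and letting $t$ vary over $\{d+1, d+2, d+3, \dots\}$ to obtain the promised infinite class.
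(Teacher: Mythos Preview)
Your proof is correct and takes essentially the same approach as the paper: both use the $(d+1)$-pode $T_{d+1}(t,\dots,t)$ with equal legs and let $t$ vary to obtain the infinite family. The only cosmetic differences are that the paper takes $t\geq d$ and directly describes the $2$-coloring (color the $i$th vertex on leg $i$ for $1\le i\le d$), whereas you take $t\geq d+1$ so as to invoke the hypothesis $|\mathcal{A}_1|\le a_1$ of the preceding example verbatim; your added squeeze argument making all spectrum entries equal is a nice explicit touch that the paper leaves implicit.
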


\begin{proof}
    Let $G$ be a $k$-pode graph, $T_{d+1}(t_1, t_2, \dots t_{d+1})$ such that for $1 \leq i \leq d+1$, all the $t_i$'s are equal and $t_i \geq d$. A fixing set for $G$ consists of $d$ of the $d+1$ leaf vertices. Further, $G$ is 2-distinguishable and we can color the following $d$ vertices a second color: for $1 \leq i \leq d$, color the $i^{th}$ vertex on the path $t_i$.
\end{proof}

\section{A Generalization of Lemma~\ref{PermLemma}}\label{biglemma}

In this section we give a generalization of Lemma~\ref{PermLemma}, which could prove
useful in coloring problems or other contexts.

Let $\overrightarrow{v}=(a_{1},\cdots ,a_{n})$ be an $n$-tuple of distinct
real numbers (or more generally, elements of an ordered abelian group). \ We
call a $k\times n$ matrix $A=(\alpha _{i,j})$ a \emph{$\overrightarrow{v%
}$-row permuted matrix} if the rows of $A$ are distinct permutations of the
components of $\overrightarrow{v}$. \ Such a matrix will be said to satisfy
the \emph{distinct column sum property (DCS)} if its column sums $%
\sum\limits_{i=1}^{k}\alpha _{i,j}$ $(1\leq j\leq n)$ are distinct.

Clearly, to form a $k\times n$ $\overrightarrow{v}$-row permuted matrix we
must have $1\leq k\leq n!$, and for the matrix to additionally satisfy DCS, $%
1\leq k\leq n!-1$.

We call two permutations of $(a_{1},\cdots ,a_{n})$ equivalent if one is a
cyclic permutation of the other. \ An equivalence class will be called a 
\emph{cyclic block}.

\begin{prop}
Given $\overrightarrow{v}=(a_{1},\cdots ,a_{n})$ an $n$-tuple ($n\geq 2$) of
distinct elements of an ordered abelian group, and given $k$ with $1\leq
k\leq n!-1$, there exists a $k\times n$  $\overrightarrow{v}$-row permuted
matrix $A$ satisfying DCS. 
\end{prop}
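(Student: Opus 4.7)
The plan is to leverage the cyclic block structure of permutations together with Lemma~\ref{PermLemma}. The first key observation is that if the rows of a $\overrightarrow{v}$-row permuted matrix form a single complete cyclic block, then each $a_i$ appears exactly once in every column (since the cyclic shifts of a tuple place each element in each position exactly once), so every column sum equals $\sum_{i=1}^{n} a_i$. Consequently, appending $j$ complete cyclic blocks to any $\overrightarrow{v}$-row permuted matrix shifts every column sum by the same constant $j\sum_i a_i$, preserving DCS. In particular, since the matrix containing all $n!$ permutations has column sums all equal to $(n-1)!\sum_i a_i$, the column sums of a $k \times n$ matrix $A$ and the $(n!-k) \times n$ matrix $A^c$ built from the complementary permutations add column-by-column to this constant; hence $A$ satisfies DCS if and only if $A^c$ does.

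The next step is to produce, for each $s \in \{1, 2, \ldots, n\}$, a base $s \times n$ $\overrightarrow{v}$-row permuted matrix satisfying DCS. For $s = 1$, the single row $(a_1, \ldots, a_n)$ has column sums equal to the $a_i$, which are distinct. For $3 \leq s \leq n$, the first $s$ rows of the matrix constructed in Lemma~\ref{PermLemma} do the job: the telescoping computation from that proof adapts directly and yields a strictly decreasing sequence of column sums whenever $s \geq 3$. The case $s = 2$ requires a new construction, as the first two rows of the Lemma~\ref{PermLemma} matrix give equal sums in the last two columns. Assuming $a_1 > \cdots > a_n$, I would instead take the two rows $(a_1, \ldots, a_n)$ and $(a_1, \ldots, a_{n-3}, a_{n-1}, a_n, a_{n-2})$, giving column sums $2a_1, \ldots, 2a_{n-3}, a_{n-2}+a_{n-1}, a_{n-1}+a_n, a_n+a_{n-2}$; these are pairwise distinct because $2a_{n-3} > 2a_{n-2} > a_{n-2} + a_{n-1}$ and the three pair sums from $\{a_{n-2}, a_{n-1}, a_n\}$ are manifestly distinct.

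For the main construction, given $k \in [1, n!-1]$, write $k = s + jn$ uniquely with $s \in \{1, \ldots, n\}$ and $j \geq 0$. Since the $s$-row base matrix occupies rows from at most $s$ of the $(n-1)!$ cyclic blocks, as long as $j \leq (n-1)! - s$ we can append $j$ disjoint complete cyclic blocks to the base and obtain a $k \times n$ matrix satisfying DCS. This directly handles $k \leq n! - (n-1)s$ for each residue class $s$ (and always handles $k \leq n$, where $j = 0$). For $k$ outside this range, the complementary size $k' = n! - k$ lies within the direct range, and the complementation observation above transfers DCS back to the desired $k$-row matrix. A short case analysis on residues modulo $n$, using $n! \geq (n-1)n$ for $n \geq 3$, shows that the direct range together with its complement exhausts $[1, n!-1]$; the $n = 2$ case forces $k = 1$, already handled by the single-row base.

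The main obstacle is the $s = 2$ case of the base construction: over a general ordered abelian group many sum coincidences are possible, and the natural choice (first two rows of the Lemma~\ref{PermLemma} matrix) fails, so the three-cycle modification above is the essential new ingredient. The rest of the argument is careful but routine accounting to confirm that cyclic-block adjunction and complementation together cover every $k$ in $[1, n!-1]$.
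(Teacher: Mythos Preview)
Your proposal is correct and follows essentially the same approach as the paper. Both arguments rest on the same three ingredients: cyclic blocks contribute constant column sums (so they can be freely appended or removed), Lemma~\ref{PermLemma} supplies the base $s\times n$ DCS matrix for $s\ge 3$ while a separate $3$-cycle construction handles $s=2$, and complementation against the full $n!\times n$ matrix converts any $k$-row DCS matrix into an $(n!-k)$-row one. The only differences are cosmetic: the paper applies complementation first to reduce to $k\le n!/2$ and then builds directly, whereas you build directly as far as $j\le (n-1)!-s$ allows and complement afterwards; and the paper's base $M_s$ permutes the first $s$ coordinates while yours (via the first $s$ rows of the Lemma~\ref{PermLemma} matrix) permutes the last $s$. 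Your final coverage check, that the direct range $[s,\,n!-s(n-1)]$ and the complemented range $[(n-s)(n-1),\,n!-(n-s)]$ overlap on each residue class, reduces to $n(n-1)\le n!$, exactly parallel to the paper's inequality $(n-1)!\ge (n-1)!/2-1+n$ for $n\ge 4$.
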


\begin{proof}
Suppose we have  $r\times n$ and $s\times n\ \overrightarrow{v}$-row
permuted matrices $M_{r}$ and $M_{s}$ where the column sums of $M_{r\text{ }}
$are constant and the column sums of $M_{s}$ are distinct. We then observe: 
\newline
(1) if no rows of $M_{s}$ are rows of $M_{r}$ , we can append the rows of $%
M_{s}$ to $M_{r}$ to form an $(r+s)\times n$ $\overrightarrow{v}$-row
permuted matrix satisfying DCS, and \newline
(2) if all rows of $M_{s}$ are rows of $M_{r}$ , we can remove the rows of $%
M_{s}$ from $M_{r}$ to form an $(r-s)\times n$ $\overrightarrow{v}$-row
permuted matrix satisfying DCS.\newline
In particular, by observation (2), if we have any $s\times n\ 
\overrightarrow{v}$-row permuted matrix $M_{s}$ satisfying DCS, we can
produce a ``complementary" $(n!-s)\times n$ $\overrightarrow{v}$-row permuted
matrix $M_{s}$ satisfying DCS by choosing as rows all permutations of $%
(a_{1},\cdots ,a_{n})$ which are not rows of $M_{s}$. \ (I.e. take as $M_{r}$
the $n!\times n$ matrix whose rows are all permutations of $(a_{1},\cdots
,a_{n})$). \ It therefore suffices to handle the cases $1\leq k\leq n!/2$. \ 

From this point forward, we assume without loss of generality that $%
a_{1}>a_{2}>\cdots >a_{n}$.\newline
First, we note that $n=2$ is trivial, and we give explicit constructions for 
$n=3\ (1\leq k\leq 3!/2=3)$:%
\begin{eqnarray*}
n &=&3,\ k=1:\left( a_{1}\ a_{2}\ a_{3}\right)  \\
n &=&3,k=2:\left( 
\begin{array}{ccc}
a_{1} & a_{2} & a_{3} \\ 
a_{2} & a_{3} & a_{1}%
\end{array}%
\right)  \\
n &=&3,k=3:\left( 
\begin{array}{ccc}
a_{1} & a_{2} & a_{3} \\ 
a_{1} & a_{3} & a_{2} \\ 
a_{3} & a_{1} & a_{2}%
\end{array}%
\right) 
\end{eqnarray*}%
(the $k=3$ case handled as in Lemma 1). $\ $We henceforth assume $n\geq 4$.
Let $k$ be given, $1\leq k\leq n!/2$. \ By the division algorithm, but
taking remainders from $1$ to $n$ rather than $0$ to $n-1$, write $k=nq+s,\
1\leq s\leq n,\ 0\leq q\leq (n-1)!/2-1$. \ \ We first form an $nq\times n$
matrix $M_{nq}$ with constant column sums by choosing $q$ cyclic blocks of
permutations of $(a_{1},\cdots ,a_{n})$ as rows. \ The choice of cyclic
blocks will depend on $s$, as seen below. \ Our intention is to use
observation (1) above to append an $s\times n$ $\overrightarrow{v}$-row
permuted matrix $M_{s}$ satisfying DCS to $M_{nq}$ to build the desired $%
k\times n$ matrix. Following Lemma 1 for the case $s>2$ (leaving $a_{j}$ in
place for $j>s$), we take $M_{s}$ as follows:%
\begin{eqnarray*}
\text{if }s &=&2,M_{s}=\left( 
\begin{array}{cccccc}
a_{1} & a_{2} & a_{3} & a_{4} & \cdots  & a_{n} \\ 
a_{2} & a_{3} & a_{1} & a_{4} & \cdots  & a_{n}%
\end{array}%
\right)  \\
\text{if }s &>&2,M_{s}=\left( 
\begin{array}{cccccccccc}
a_{1} & a_{2} & a_{3} & \cdots  & a_{s-1} & a_{s} & \cdots  & a_{n-2} & 
a_{n-1} & a_{n} \\ 
a_{1} & a_{2} & a_{3} & \cdots  & a_{s} & a_{s-1} & \cdots  & a_{n-2} & 
a_{n-1} & a_{n} \\ 
\vdots  & \vdots  & \vdots  &  &  &  &  & \vdots  & \vdots  & \vdots  \\ 
a_{1} & a_{s} & a_{2} & \cdots  & a_{s-2} & a_{s-1} & \cdots  & a_{n-2} & 
a_{n-1} & a_{n} \\ 
a_{s} & a_{1} & a_{2} & \cdots  & a_{s-2} & a_{s-1} & \cdots  & a_{n-2} & 
a_{n-1} & a_{n}%
\end{array}%
\right) 
\end{eqnarray*}%

By inspection for $s=2$ and by Lemma 1 for $s>2$, we see that these do
indeed construct $s\times n$ $\overrightarrow{v}$-row permuted matrices $%
M_{s}$ satisfying DCS. \ We see now that, to finish the proof, we need to be
able to make our choices of cyclic blocks in $M_{nq}$ so as to avoid the
permutations in $M_{s}$. \ That means we must avoid at most $n$ cyclic
blocks in our choices. \ Since $q\leq (n-1)!/2-1$, we then need to be able
to choose at most $(n-1)!/2-1$ out of a total of $(n-1)!$ cyclic blocks
while avoiding at most $n$ cyclic blocks. \ That is possible so long as $%
(n-1)!\geq (n-1)!/2-1+n$. \ But that inequality holds precisely for $n\geq 4$%
.
\end{proof}

\begin{example}

Take $n=5,k=18.$ \ For $M_{s}=M_{3}$ we build 

$$\left( 
\begin{array}{ccccc}
a_{1} & a_{2} & a_{3} & a_{4} & a_{5} \\ 
a_{1} & a_{3} & a_{2} & a_{4} & a_{5} \\ 
a_{3} & a_{1} & a_{2} & a_{4} & a_{5}%
\end{array}%
\right) .$$ \ 
To construct $M_{nq}=M_{5\cdot 3}$ we need to choose $3$ cyclic
blocks that avoid the equivalence classes of the rows of $M_{3}$. \ So as a
final product, we could take, for instance (transposed for convenience):%

{\scriptsize
\[
\left( 
\begin{array}{cccccccccccccccccc}
a_{2} & a_{5} & a_{1} & a_{4} & a_{3} & a_{2} & a_{1} & a_{5} & a_{3} & a_{4}
& a_{1} & a_{3} & a_{5} & a_{2} & a_{4} & a_{1} & a_{1} & a_{3} \\ 
a_{3} & a_{2} & a_{5} & a_{1} & a_{4} & a_{4} & a_{2} & a_{1} & a_{5} & a_{3}
& a_{4} & a_{1} & a_{3} & a_{5} & a_{2} & a_{2} & a_{3} & a_{1} \\ 
a_{4} & a_{3} & a_{2} & a_{5} & a_{1} & a_{3} & a_{4} & a_{2} & a_{1} & a_{5}
& a_{2} & a_{4} & a_{1} & a_{3} & a_{5} & a_{3} & a_{2} & a_{2} \\ 
a_{1} & a_{4} & a_{3} & a_{2} & a_{5} & a_{5} & a_{3} & a_{4} & a_{2} & a_{1}
& a_{5} & a_{2} & a_{4} & a_{1} & a_{3} & a_{4} & a_{4} & a_{4} \\ 
a_{5} & a_{1} & a_{4} & a_{3} & a_{2} & a_{1} & a_{5} & a_{3} & a_{4} & a_{2}
& a_{3} & a_{5} & a_{2} & a_{4} & a_{1} & a_{5} & a_{5} & a_{5}%
\end{array}%
\right) ^{T}
\]%
}

$\allowbreak $
If we wanted to handle the case $n=5,k=102$ we would just form
a matrix out of the $102$ permutations not seen in the matrix above.
\end{example}

\section{Future Work and Open Questions}\label{future}

In this section, we share some open problems and directions. Of course, given that the paint cost spectrum and the paint cost ratio are new concepts, determining these parameters for any class of graphs is wide open.\\
\\
Recall that in~\cite{AK21}, Alikhani and Soltani generalized the cost of 2-distinguishing by defining the \emph{cost number} $\rho_d(G)$ (where $d = \dist(G)$) as the size of the smallest color class over all $d$-distinguishing colorings of $G$. We can think of this as answering the question, \textit{how far away is a $d$-distinguishable graph from being $(d-1)$-distinguishable?} We observe here that Theorem~\ref{dist} implies the following result on the cost number of a perfect $k$-ary tree $T^n_k$.

\begin{coroll}\label{costnumber}
The  cost number of a perfect $k$-ary tree $T^n_k$ is given by
\[\rho_k(T^n_k)=\frac{k^n-1}{k-1}.\]
\end{coroll}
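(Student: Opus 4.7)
The plan is to derive the corollary directly from the recursive construction and uniqueness statement in the proof of Theorem~\ref{dist}. That proof produces a specific $k$-distinguishing coloring of $T^n_k$ with the root colored neutral, and asserts that it is unique up to strong isomorphism. Any other $k$-distinguishing coloring arises from this one by a permutation of the $k$ color labels, which permutes the color classes without altering their sizes, so the multiset of class sizes is an invariant of $T^n_k$. Hence $\rho_k(T^n_k)$ equals the smallest color class size in the constructed coloring.

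The next step is a short recursive count. Let $f_n$ denote the size of a non-neutral color class in the constructed coloring. For $n=1$, the root is neutral and each non-neutral color labels exactly one leaf, so $f_1 = 1$. In the recursive step, $T^n_k$ consists of a neutral root attached to $k$ copies of the $T^{n-1}_k$ coloring, with the $k$ sub-subtree roots re-colored to $c_0, c_1, \ldots, c_{k-1}$. A non-neutral color $c_j$ picks up $f_{n-1}$ vertices from each of the $k-1$ sub-subtrees whose root was not re-colored to $c_j$, plus $f_{n-1}+1$ vertices from the sub-subtree whose root was re-colored to $c_j$. This yields the recurrence $f_n = k f_{n-1} + 1$, whose closed form with $f_1=1$ is $f_n = 1 + k + \cdots + k^{n-1} = \frac{k^n-1}{k-1}$, as asserted.

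The main obstacle is justifying that every $k$-distinguishing coloring of $T^n_k$ actually shares the same multiset of class sizes as the constructed one. If the uniqueness in Theorem~\ref{dist} is read as applying only to paint-cost-optimal colorings, a supplementary induction is required. The idea is first to observe that the restriction of any $k$-distinguishing coloring to a leafy subtree $T^{n-1}_k$ is again $k$-distinguishing (extending a color-preserving automorphism of the subtree by the identity yields a color-preserving automorphism of $T^n_k$, since the subtree's root is identified by its degree), and second to prove the inductive sub-claim that two $k$-distinguishing colorings of $T^m_k$ are strongly isomorphic if and only if their roots share a color. Together these force the $k$ depth-$1$ sub-subtrees of $T^n_k$ to have pairwise distinct root colors, and by induction each depth $i$ with $1 \le i \le n$ contains exactly $k^{i-1}$ vertices of each color, matching the counts above and giving $\rho_k(T^n_k) = \frac{k^n-1}{k-1}$.
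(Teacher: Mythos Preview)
Your proposal is correct and follows the same route the paper intends: the paper omits the proof entirely, saying only that the corollary is ``a trivial consequence of Theorem~\ref{dist}.'' You flesh this out by invoking the uniqueness (up to relabeling) of the $k$-distinguishing coloring established there, and then computing the size of a non-neutral class via the recurrence $f_n = kf_{n-1}+1$. A marginally quicker way to finish, and likely what the authors had in mind, is to observe that in the constructed coloring the $k-1$ non-neutral colors play symmetric roles, so each non-neutral class has size $\rho^k(T^n_k)/(k-1) = (k^n-1)/(k-1)$, while the neutral class is strictly larger; your recursive count of course yields the same value. Your third paragraph, which carefully justifies that \emph{every} $k$-distinguishing coloring (not just a paint-cost-optimal one) has this multiset of class sizes, makes explicit a point the paper's proof of Theorem~\ref{dist} states but does not spell out in detail.
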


We omit the proof of Corollary~\ref{costnumber} as it is just a trivial consequence of Theorem~\ref{dist}. 

The cost number is defined only for the distinguishing number of a graph. However, we can extend this definition in the natural way for a graph $G$ for all $d$ such that $\dist(G) \leq d \leq \fix(G)+1$ and then define a \emph{cost number spectrum} to be the following:
\begin{center}
    $(\dist(G); \rho_{\dist(G)}(G), \rho_{\dist(G)+1}(G), \rho_{\dist(G)+2}(G), \dots, \rho_{\det(G)+1}(G)).$
\end{center}

We observe that for a given graph $G$, $\rho_{\det(G)+1}(G) = 1$. So here, similar to the frugal distinguishing number, the value of interest in the spectrum is the first place in which $\rho_d(G) = 1$. Call this value the \emph{frugal cost number} and denote it by $\fcost(G)$. We can then ask the following questions:
\begin{itemize}
    \item Determine the cost number spectrum for any class of graphs.
    \item How does the frugal cost number relate to the frugal distinguishing number? When are these values equal?
\end{itemize}

In \cite{KA15} and \cite{Mc22}, the authors define edge analogs of the concepts of distinguishing set and fixing set for breaking symmetries of a graph, respectively. Moreover, in \cite{GO19} the authors introduce the idea of the cost of edge-distinguishing a $2$-edge-distinguishable graph. A natural question that arises from combining these concepts is to investigate the edge analog of the paint cost spectrum as defined in this paper. To investigate this spectrum, one may need to first answer the following questions:
\begin{itemize}
    \item Generalize the concept of the cost of edge-distinguishing from just $2$-edge-distinguishable graphs to any $d$-edge-distinguishable graph. As discussed in this paper, there's more than more perspective from which this can be done, for example:
    \begin{itemize}
        \item the minimum number of edges in the graph needing to be recolored from a neutral color in order to have that $d$-edge-distinguishing coloring of $G$,
        \item how far away is $G$ from being $(d-1)$-edge-distinguishable.
    \end{itemize}
    \item Determine the corresponding distinguishing index, fixing index, and frugal index for particular classes of graphs.
\end{itemize}

Lastly, we recall the open question mentioned at the end of Section~\ref{prelim1}:

\begin{itemize}
    \item Determine whether or not it's possible for equal paint costs exist aside from $\rho^d(G)$ where $d \geq \fdist(G)$. In other words, can there be consecutive integers $i$ and $i+1$, with $i+1 < \fdist(G)$, such that $\rho^i(G) = \rho^{i+1}(G)$? And, in particular, is it possible to have $\rho^{\dist(G)}(G) = \rho^{\dist(G)+1}(G)$ when $\rho^{\dist(G)}(G) \neq \fix(G)$?
\end{itemize}

\end{document}